\newtheorem{thm}{Theorem}[section]
\newtheorem*{thm*}{Theorem}
\newtheorem{prop}[thm]{Proposition}
\newtheorem{lem}[thm]{Lemma}
\newtheorem{cor}[thm]{Corollary}
\theoremstyle{definition}
\newtheorem{defn}[thm]{Definition}
\newtheorem{rem}[thm]{Remark}
\newtheorem{ex}[thm]{Example}
\DeclareMathOperator\W{\mathcal W}
\DeclareMathOperator\A{\mathcal A}
\DeclareMathOperator\C{\mathbb{C}}
\DeclareMathOperator\R{\mathbb{R}}
\DeclareMathOperator\N{\mathbb{N}}
\DeclareMathOperator\conv{conv}
\DeclareMathOperator\Ad{Ad}
\DeclareMathOperator\ran{Ran}
\DeclareMathOperator\tr{tr}
\DeclareMathOperator\U{U}
\DeclareMathOperator\SU{SU}
\DeclareMathOperator\bd{bd}
\DeclareMathOperator\su{{\mathfrak su}}
\def\u{{\mathfrak u}}
\def\k{{\mathfrak k}}
\def\h{{\mathfrak h}}
\title{Convexity of sums of eigenvalues of a segment of unitaries}
\date{\today}
\author{Gabriel Larotonda}
\email[Gabriel Larotonda]{glaroton@dm.uba.ar}
\address[Gabriel Larotonda]{Facultad de Ciencias Exactas y Naturales, Universidad de Buenos Aires, Ciudad Universitaria, Avenida Cantilo S/N (1428), Buenos Aires, Argentina}
\author{Martin Miglioli}
\email[Martin Miglioli]{martin.miglioli@gmail.com}
\address[Martin Miglioli]{Instituto Argentino de Matem\'atica-CONICET. Saavedra 15, Piso 3, (1083) Buenos Aires, Argentina}
\keywords{convexity, geodesic, eigenvalues of the product, spectrum, unitary group, unitary matrix, unitarily invariant norm, weakly unitarily invariant norm}
\subjclass[2020]{Primary 15A42; Secondary  26A51, 51F25}
\begin{document}

\begin{abstract}
For a $n\times n$ unitary matrix $u=e^z$ with $z$ skew-Hermitian, the angles of $u$ are the arguments of its spectrum, i.e. the spectrum of $-iz$. For $1\le m\le n$, we show that $s_m(t)$, the sum of the first $m$ angles of the path $t\mapsto e^{tx}e^y$ of unitary matrices, is a convex function of $t$ (provided the path stays in a vecinity of the identity matrix). This vecinity is described in terms of the opertor norm of matrices, and it is optimal. We show that the when all the maps $t\mapsto s_m(t)$ are linear, then $x$ commutes with $y$. Several application to unitarily invariant norms in the unitary group are given. Then we extend these applications to $\Ad$-invariant Finsler norms in the special unitary group of matrices. This last result is obtained by proving that any $\Ad$-invariant Finsler norm in a compact semi-simple Lie group $K$ is the supremum of a family of what we call orbit norms, induced by the Killing form of $K$. 
\end{abstract}

\maketitle

\tableofcontents



\section{Introduction}

In this paper we study the eigenvalues of a geodesic segment in the unitary group $\U(n)$ of matrices 
\[
 t\mapsto u(t)= e^{itx}e^{iy} \qquad t\in\mathbb R
\]
where $x,y$ are self-adjoint $n\times n$ complex matrices, and in particular we establish that
\[
t\mapsto \|\log(e^{itx}e^{iy})\|_{\infty}
\]
is convex as long as $\|u(t)-1\|_{\infty}<\sqrt{2}$. This eigenvalue problem  has been approached with several techniques in the literature, in different contexts (matrices, compact operators, bounded operators, etc), see \cite{pschatten, fredholm, finitemeasure} and also \cite{miglioli} and the references therein. It is part of what in the literature is known loosely as product eigenvalue problems, and connected to many relevant results in matrix analysis and operator theory, such as the quantum Horn conjecture/theorem proved by Belkale \cite{belkale} which characterizes the eigenvalues of a product of unitary matrices.  For techniques more closely related to those in this paper, see also \cite{miglioli24} and the references therein.  With indirect relation to our results, convexity properties of the  condition number of a matrix are known, see for instance \cite{beltran} and the references therein.

In particular, this convexity allows for a good control in the  dispersion of the spectrum of a path of unitary matrices. More to the point, since the neighbourhood of convexity we obtain with our techniques in this paper is optimal, we are able to use our results to obtain the optimal radius of convexity of balls in the unitary group $\U(n)$, for distances that are bi-invariant for the action of the group itself. Our results can be stated in terms of the distance map $t\mapsto d(t)$ from a point to a geodesic segment in the unitary group of matrices, and it can be though of as a generalization of a well-known result in the Riemannian geometry of the unit sphere  $S^n\subset \mathbb R^{n+1}$, that says that such a distance map from the nort pole of the sphere to a geodesic segment is convex, as long as the geodesic stays in the upper hemisphere.


\medskip

What follows is a brief description of the organisation and main results of this paper. In Section 2 introduce the notations and recall (with a self-included proof) the first and second variation formulas for the eigenvalues of a smooth path of normal matrices. In Section 3 we specialize these formulas to our segment $t\mapsto u(t)=e^{itx}e^{iy}$, with eigenvalues $e^{i\theta_k(t)}$ and $\theta_1(t)>\theta_2(t)>\dots >\theta_n(t)$, and with this we prove that for each $1\le m\le n$, the map $t\mapsto \sum_{i=1}^m\theta_i(t)$ is convex provided $\|u(t)-1\|_{\infty}<\sqrt{2}$ (the norm here is the spectral norm). Later it is proved that this neighbourhood is optimal. The case  of linear maps is proved to be equivalent to the commuation of $x$ and $y$. Then by means of a limiting argument, we extend the result to the case of collapsing eigenvalues $\theta_i$, which is one of the main result of this paper (Theorem \ref{convsumdistgeneral}). Similar results are obtained for the partial sums of singular values. In Section 4, we use these results to prove that for any unitarily invariant norm $\|\cdot\|$, if we write $e^{ix(t)}=u(t)=e^{itx}e^{iy}$ for $x(t)$ the skew-adjoint logarithm of $u$, then $t\mapsto \|x(t)\|$ is convex, provided $\|x(t)\|_{\infty}<\frac{\pi}{2}$ (Theorem \ref{convdistancNUI}).  In Section 5 we move on to application to \textit{weakly invariant norms} i.e. norms invariant for the adjoint action of the group on itself, in particular for $\SU(n)$ we only ask that $\|UXU^*\|=\|X\|$ for any $X\in \su(n)$ and any $U\in \SU(n)$. The case of (only) positively homogeneous norms is also included in this discussion, in what we call $\Ad$\textit{-invariant Finsler norms}. Ellaborating on some ideas that follow from Kostant's theorem, we prove (Theorem \ref{normsuphofer}) that for a compact semi-simple group $K$ with Lie algebra $\mathfrak k$, any $\Ad$-invariant Finsler norm can be obtained as a supremum of a distinguished family of norms induced by the Killing form (what we  call \textit{orbit norms}). This extends a known result of Horn and Mathias \cite{horn} to this general setting. Combining this and the previous results, we extend the convexity theorem to weakly invariant norms in $\SU(n)$, and we characterize the case of strict convexity for orbit norms with different eigenvalues (Theorem \ref{striconv}). In Section 6 we finish the paper with applications of the  previous results, combined with our results from \cite{larmi}, to establis the convexity of the map 
\[
t\mapsto d_{\phi}(1,e^{itx}e^{iy})
\]
where $d_{\phi}$ is the rectifiable distance induced in $\U(n)$ or $\SU(n)$ by an invariant norm (Theorems \ref{convadsu} and \ref{convrecu}).

\section{First and second variation formulas for eigenvalues}

Let $I\subset \mathbb R$ be a compact connected interval, and  let $u:I\to M_n(\mathbb C)$ be a smooth path of normal matrices, and let $\lambda_k(t)$ be its eigenvalues. Assume that all of them are distinct for each $t\in I$. Then, being the zeroes of the characteristic polynomial $p(t,\lambda)=0$ (and since there are no multiple roots) the map $p$ is regular and 
$$
\frac{\partial p}{\partial \lambda}(t,\lambda_k(t))\ne 0.
$$
By  means of the implicit function theorem, it follows that $t\mapsto \lambda_k(t)$ is as smooth as $t\mapsto u(t)$. For each $k$, we pick a unit norm eigenvector $v_k(t)$ for $\lambda_k(t)$, i.e.
\begin{equation}\label{uvlv}
u(t)v_k(t)=\lambda_k(t)v_k(t).
\end{equation}
The functions $v_k$, being a solution of a linear system with smooth coefficients, can be chosen smooth. A practical way of doing this is to select a circle $C_k\subset \mathbb C$ containing each eigenvalue, such that it contains $\lambda_k(t)$ for all $t\in I=[0,b]$ (since $\lambda_k$ are continuous, this can be done for a compact interval which we assumme is the whole of $I$). Now for each $k=1,\cdots,n$ we consider the eigenprojection
$$
p_k(t)=\frac{1}{2\pi i}\oint_{C_k} (\lambda- u(t))^{-1} d\lambda.
$$
Clearly $p_k:I\to M_n(\mathbb C)_{sim}$ is smooth (as smooth as $u$). Now let 
$$
\varepsilon_k(t)=2p_k(t)-1,
$$
which is also smooth and is a symmetry with fixed subspace the range of $p_k(t)$. As long as $\|p_k(t)-p_k(0)\|<1$, we see that $\|\varepsilon_k(t)\varepsilon_k(0)-1\|<2$. Then it is well-known that if we set
$$
z_k(t)=\frac{1}{2}\ln(\varepsilon_k(t)\varepsilon_k(0))
$$
then $z_k(t)$ is a direct rotation that exchanges the symmetries i.e. $e^{2z_k(t)}\varepsilon_k(0)=\varepsilon_k(t)$ (this can be checked by hand). Now for $t=0$ we pick a unit norm vector $v_k\in \ran(p_k(0))$, and we consider the smooth paths of unit vectors  
$$
v_k(t)=e^{z_k(t)}v_k.
$$
Then it can be readily checked that these are eigenvectors for $u$, i.e. (\ref{uvlv}) is satisfied for each $k$.

Note that since $u$ is normal, eigenvectors corresponding to different eigenvalues are orthogonal, so we a have a smooth, moving orthonormal basis $\{v_k(t)\}_{k=1,\dots,n}$ spanning the whole space. 

What follows are the classical first and second variation formulas for the eigenvalues of $u$; we include a proof since it is elementary and we want this article to be as complete as possible. We use $\langle  v,w\rangle$ to denote the standard inner product in $\mathbb C^n$, which is conjugate-linear in its second entry.

\begin{thm}\label{variaa} Let $u:I\to M_n(\mathbb C)$ be a $C^r$ path of normal matrices ($r\ge 1$). Assumme that all the eigenvalues $\lambda_k(t)$ of $u(t)$ are different for all $t$. Let $\{v_k(t)\}_{k=1,\dots,n}$ be a smooth orthonormal basis of its eigenvectors. Then the first variation formula is 
$$
\lambda_k'(t)=\langle u'(t)v_k(t), v_k(t)\rangle  
$$
and the second variation formula is
$$
\lambda_k''(t)= \langle u''(t) v_k(t), v_k(t)\rangle + 2 \sum_{j\ne k} \frac{\langle u'(t)v_k(t), v_j(t)\rangle \langle u'(t) v_j(t),v_k(t)\rangle}{\lambda_k(t)-\lambda_j(t)}.
$$
\end{thm}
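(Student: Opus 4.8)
The plan is to differentiate the eigenvalue identity (\ref{uvlv}) once and then twice, pairing each resulting identity with $v_k(t)$ (and, for the second-order terms, with the remaining basis vectors $v_j(t)$), and to use normality of $u(t)$ in the one decisive step. Fix $t\in I$ and suppress it from the notation. The observation to record at the outset is that, since $u$ is normal, $uv_k=\lambda_k v_k$ forces $u^*v_k=\overline{\lambda_k}v_k$, and therefore, for any vector $w$,
\[
\langle uw,v_k\rangle=\langle w,u^*v_k\rangle=\langle w,\overline{\lambda_k}v_k\rangle=\lambda_k\langle w,v_k\rangle,
\]
the inner product being conjugate-linear in its second entry. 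This is the only place normality enters; everything else is bookkeeping. Differentiating $uv_k=\lambda_kv_k$ gives $u'v_k+uv_k'=\lambda_k'v_k+\lambda_kv_k'$, and pairing with $v_k$ (using $\|v_k\|=1$ and the displayed identity with $w=v_k'$) the terms $\lambda_k\langle v_k',v_k\rangle$ cancel on the two sides, leaving $\lambda_k'=\langle u'v_k,v_k\rangle$.

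For the second variation, differentiate once more to obtain $u''v_k+2u'v_k'+uv_k''=\lambda_k''v_k+2\lambda_k'v_k'+\lambda_kv_k''$; pairing with $v_k$ and absorbing $\langle uv_k'',v_k\rangle=\lambda_k\langle v_k'',v_k\rangle$ yields
\[
\langle u''v_k,v_k\rangle+2\langle u'v_k',v_k\rangle=\lambda_k''+2\lambda_k'\langle v_k',v_k\rangle.
\]
Now expand $v_k'=\sum_j\langle v_k',v_j\rangle v_j$ in the moving orthonormal basis; the $j=k$ summand contributes $\lambda_k'\langle v_k',v_k\rangle$ to $\langle u'v_k',v_k\rangle$, which cancels the $2\lambda_k'\langle v_k',v_k\rangle$ on the right, so that the possibly nonzero, purely imaginary scalar $\langle v_k',v_k\rangle$ disappears and no gauge fixing of the $v_k(t)$ is needed. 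What survives is $\lambda_k''=\langle u''v_k,v_k\rangle+2\sum_{j\ne k}\langle v_k',v_j\rangle\langle u'v_j,v_k\rangle$. To evaluate $\langle v_k',v_j\rangle$ for $j\ne k$, pair the first-derivative identity with $v_j$: orthogonality kills $\lambda_k'\langle v_k,v_j\rangle$, normality gives $\langle uv_k',v_j\rangle=\lambda_j\langle v_k',v_j\rangle$, and since $\lambda_k\ne\lambda_j$ we may solve to get $\langle v_k',v_j\rangle=\langle u'v_k,v_j\rangle/(\lambda_k-\lambda_j)$. Substituting this in produces exactly the stated formula.

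I do not expect a genuine obstacle: the argument is a short computation, with the implicit function theorem (already invoked above) supplying the needed smoothness of the $\lambda_k$ and $v_k$. The two points deserving attention are (i) that it is normality, not self-adjointness, that makes $u$ act as a scalar when it sits on the left of an inner product against its own eigenvector, which is what lets the $v_k'$ and $v_k''$ terms collapse; and (ii) that the terms carrying $\langle v_k',v_k\rangle$ cancel of their own accord, so the formulas hold for any smooth normalized choice of eigenvectors regardless of phase.
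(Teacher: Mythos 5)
Your proof is correct, and it differs from the paper's in a worthwhile way at the second-order step. The paper obtains the second variation by differentiating the scalar identity $\lambda_k'=\langle u'v_k,v_k\rangle$, which produces the two terms $\langle u'v_k',v_k\rangle$ and $\langle u'v_k,v_k'\rangle$; handling the second of these brings in the conjugated coefficients $\overline{\langle v_k',v_j\rangle}$, so the paper must use that $\langle v_k',v_k\rangle$ is purely imaginary to kill the diagonal contribution, and combining the two off-diagonal sums into $2\sum_{j\ne k}\langle u'v_k,v_j\rangle\langle u'v_j,v_k\rangle/(\lambda_k-\lambda_j)$ implicitly uses a symmetry relation that comes from normality of the whole path (differentiating $u^*u=uu^*$ yields $(\lambda_k-\lambda_j)\overline{\langle u'v_j,v_k\rangle}=\overline{(\lambda_k-\lambda_j)}\,\langle u'v_k,v_j\rangle$, which is what makes the two sums equal). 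You instead differentiate the vector equation $uv_k=\lambda_kv_k$ twice and pair with $v_k$, using normality only to move $u$ onto its own eigenvector so that the $v_k''$ terms cancel; then the gauge term $\langle v_k',v_k\rangle$ drops out identically, with no need for it to be purely imaginary and no need to symmetrize two sums. This is cleaner: it sidesteps the most delicate (and in the paper, somewhat compressed) step, while using the same first-order ingredient $\langle v_k',v_j\rangle=\langle u'v_k,v_j\rangle/(\lambda_k-\lambda_j)$ for $j\ne k$, derived exactly as in the paper. Both arguments rest on the same two facts (normality giving $u^*v_k=\overline{\lambda_k}v_k$, and the eigenbasis expansion of $v_k'$), so the overall strategy is shared; yours simply organizes the second derivative so that the cancellations are automatic.
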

\begin{proof}
From $uv_k=\lambda_k v_k$, we have $u'v_k+uv_k'=\lambda_k'v_k+\lambda_k v_k'$, and multiplying by $v_j^*$ we get 
$$
\langle u'v_k,v_j\rangle +\langle u v_k',v_j\rangle =\lambda_k' \delta_{jk}+ \lambda_k \langle v_k',v_j\rangle
$$
Now 
$$
\langle u v_k',v_j\rangle= \langle v_k',u^*v_j\rangle=\langle  v_k',\overline{\lambda_j}v_j\rangle=\lambda_j  \langle v_k',v_j\rangle,
$$
hence
$$
\langle u'v_k,v_j\rangle  =\lambda_k' \delta_{jk}+ (\lambda_k-\lambda_j) \langle v_k',v_j\rangle.
$$
In particular taking $j=k$ we obtain the first variation formula. Now note that for each $k$, we have $v_k'=\sum_{j=1}^n c_j^k v_j$ for the smooth functions $c_j=\langle v_k',v_j\rangle$. Then from the previous identity we see that
$$
c_j^k=\frac{\langle u'v_k,v_j\rangle}{\lambda_k-\lambda_j}  \quad  \mbox{ for }\quad j\ne k,
$$
hence
\begin{equation}\label{vprima}
v_k'= \sum_{j\ne k}\frac{\langle u'v_k,v_j\rangle}{\lambda_k-\lambda_j} v_j +  c_k^k v_k.
\end{equation}
Now we differentiate the first variation formula:
\begin{equation}\label{sec}
\langle u'' v_k,v_k\rangle +\langle u'v_k',v_k\rangle +\langle u' v_k,v_k'\rangle = \lambda_k'',
\end{equation}
and using (\ref{vprima}) we compute
$$
\langle u'v_k',v_k\rangle=\sum_{j\ne k}\frac{\langle u'v_k,v_j\rangle}{\lambda_k-\lambda_j}\langle u' v_j,v_k\rangle  +  c_k^k  \langle u'v_k,v_k\rangle
$$
and
$$
\langle u' v_k,v_k'\rangle= \sum_{j\ne k}\frac{\langle u'v_k,v_j\rangle}{\lambda_k-\lambda_j}\langle u' v_k,v_k\rangle  +  \overline{c_k^k}  \langle u'v_k,v_k\rangle
$$
Now from $\|v_k\|=1$ we see that 
$$
2\textrm{Re}\langle v_k',v_k\rangle=\langle v_k',v_k\rangle+\langle v_k,v_k'\rangle= \frac{d}{dt}\|v_k\|^2=0,
$$
hence  $c_k^k=\langle v_k',v_k\rangle$ is purely imaginary. Thus adding the previous two expressions, we see from (\ref{sec}) that
$$
\lambda_k''=\langle u'' v_k,v_k\rangle +2\sum_{j\ne k}\frac{\langle u'v_k,v_j\rangle\langle u' v_j,v_k\rangle}{\lambda_k-\lambda_j},
$$
which is the second variation formula.
\end{proof}

\section{Convexity of geodesic segments in $U_n(\mathbb C)$}

\subsection{Second variational formula for the arguments of eigenvalues}

A path $u(t)=e^{itx}e^{iy}$ of unitaries, $x^*=x$, $y^*=y$, is called a segment since it is a geodesic arc for the canonical Lie group connection induced by any bi-invariant metric in the unitary group $U_n(\mathbb C)$. 

We assumme that the path is subject to the condition $\|u(t)-1\|<2$. If $iz(t)=\log(u(t))$ is the (smooth) principal branch of its logarithm, then $z^*=z$, $\|z\|<\pi$. We are interested in the eigenvalues of such path. 

To this end, let $-\pi<\theta_1(t)\leq\theta_2(t)\leq\dots\leq\theta_n(t)<\pi$ be the eigenvalues of $z(t)$, i.e. $\lambda_k(t)=e^{i\theta_k(t)}$ are the eigenvalues of $u(t)$. If we assume that the eigenvalues of $u(t)$ are distinct for all $t$, then each $\theta_k:I \to \R$ is smooth from the previous discussion. As before, let  $v_k(t)$ be the corresponding unit norm eigenvector, that is $u(t)v_k(t)=e^{i\theta_k(t)}v_k(t)$.

\begin{prop}\label{propconvsum}
Let $-\pi<\theta_1(t)<\theta_2(t)<\dots<\theta_n(t)<\pi$ be the arguments of the eigenvalues of a path $u(t)=e^{itx}e^{iy}$ such that $\|u(t)-1\|<2$ for all $t\in [a,b]$. Then, for $k=1,\dots,n$ we have 
\begin{equation}\label{thetak}
\theta_k'(t)=\langle xv_k(t),v_k(t)\rangle,
\end{equation}
\begin{align}\label{theta2}
\theta_k''(t) &=  2 \sum_{j\ne k} \frac{ \sin(\theta_j(t)-\theta_k(t))}{|e^{i\theta_k(t)}-e^{i\theta_j(t)}|^2}|\langle x v_k(t), v_j(t)\rangle|^2.
\end{align}
and for each $1\le m\le n$
\begin{equation}\label{theta3}
\sum_{k=1}^m\theta_k''(t)= 2 \sum_{k=1}^m\sum_{j=m+1}^n \frac{ \sin(\theta_k(t)-\theta_j(t))}{|e^{i\theta_k(t)}-e^{i\theta_j(t)}|^2}|\langle x v_k(t), v_j(t)\rangle|^2.
\end{equation}
\end{prop}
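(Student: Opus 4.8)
The plan is to deduce all three identities directly from Theorem \ref{variaa}, applied to the smooth path of normal (unitary) matrices $u(t)=e^{itx}e^{iy}$, which on the interval where $\|u(t)-1\|<2$ has a smooth principal logarithm and, by hypothesis, simple spectrum, so that a smooth moving orthonormal eigenbasis $\{v_k(t)\}$ exists. First I would record the elementary identities $u'(t)=ix\,u(t)$ and $u''(t)=-x^2u(t)$, valid because $ix$ commutes with $e^{itx}$. Evaluating on the eigenbasis and writing $\lambda_k(t)=e^{i\theta_k(t)}$, so that $u(t)v_k(t)=\lambda_k(t)v_k(t)$, these become $u'(t)v_k(t)=i\lambda_k(t)\,xv_k(t)$ and $u''(t)v_k(t)=-\lambda_k(t)\,x^2v_k(t)$. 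Substituting the first into the first variation formula $\lambda_k'=\langle u'v_k,v_k\rangle$ and using $\lambda_k'=i\theta_k'e^{i\theta_k}$, the common nonzero factor $ie^{i\theta_k}$ cancels and (\ref{thetak}) drops out; note $\theta_k'=\langle xv_k,v_k\rangle\in\mathbb R$ since $x=x^*$ and $\|v_k\|=1$.

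For the second order identity I would feed the same substitutions into the second variation formula of Theorem \ref{variaa}. The diagonal term is $\langle u''v_k,v_k\rangle=-\lambda_k\langle x^2v_k,v_k\rangle=-\lambda_k\|xv_k\|^2$, and each off-diagonal numerator is
\[
\langle u'v_k,v_j\rangle\langle u'v_j,v_k\rangle=(i\lambda_k)(i\lambda_j)\,\langle xv_k,v_j\rangle\langle xv_j,v_k\rangle=-\lambda_k\lambda_j\,|\langle xv_k,v_j\rangle|^2,
\]
where I use $\langle xv_j,v_k\rangle=\langle v_j,xv_k\rangle=\overline{\langle xv_k,v_j\rangle}$. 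Writing also $\lambda_k''=(i\theta_k''-(\theta_k')^2)e^{i\theta_k}$ and dividing the resulting identity by $\lambda_k\ne 0$ gives
\[
i\theta_k''-(\theta_k')^2=-\|xv_k\|^2-2\sum_{j\ne k}\frac{\lambda_j}{\lambda_k-\lambda_j}\,|\langle xv_k,v_j\rangle|^2.
\]
Expanding $\|xv_k\|^2=\sum_{j}|\langle xv_k,v_j\rangle|^2=(\theta_k')^2+\sum_{j\ne k}|\langle xv_k,v_j\rangle|^2$ by Parseval for the basis $\{v_j\}$ together with (\ref{thetak}), the two $(\theta_k')^2$ terms cancel, leaving $i\theta_k''=-\sum_{j\ne k}\bigl(1+\tfrac{2\lambda_j}{\lambda_k-\lambda_j}\bigr)|\langle xv_k,v_j\rangle|^2=-\sum_{j\ne k}\tfrac{\lambda_k+\lambda_j}{\lambda_k-\lambda_j}|\langle xv_k,v_j\rangle|^2$.

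The remaining step is a half-angle computation: writing $\lambda_k=e^{i\theta_k}$ and factoring $e^{i(\theta_k+\theta_j)/2}$ out of numerator and denominator gives $\frac{\lambda_k+\lambda_j}{\lambda_k-\lambda_j}=-i\cot\!\bigl(\tfrac{\theta_k-\theta_j}{2}\bigr)$, and since $|e^{i\theta_k}-e^{i\theta_j}|^2=4\sin^2\!\bigl(\tfrac{\theta_k-\theta_j}{2}\bigr)$ and $\sin(\theta_k-\theta_j)=2\sin\!\bigl(\tfrac{\theta_k-\theta_j}{2}\bigr)\cos\!\bigl(\tfrac{\theta_k-\theta_j}{2}\bigr)$, this is $-i\,\tfrac{2\sin(\theta_k-\theta_j)}{|e^{i\theta_k}-e^{i\theta_j}|^2}$; dividing by $i$ yields (\ref{theta2}). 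Finally, for (\ref{theta3}) I would sum (\ref{theta2}) over $k=1,\dots,m$ and split each inner sum over $j$ into the ranges $1\le j\le m$ (with $j\ne k$) and $m+1\le j\le n$. The summand $\frac{\sin(\theta_k-\theta_j)}{|e^{i\theta_k}-e^{i\theta_j}|^2}|\langle xv_k,v_j\rangle|^2$ is antisymmetric under $k\leftrightarrow j$ — the denominator and the factor $|\langle xv_k,v_j\rangle|^2$ are symmetric (again by $\langle xv_j,v_k\rangle=\overline{\langle xv_k,v_j\rangle}$), while $\sin(\theta_k-\theta_j)$ is odd — so the part with $1\le j\le m$ cancels in pairs, and only the double sum over $1\le k\le m<j\le n$ survives, which is exactly (\ref{theta3}).

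Nothing here is deep, so the only genuine hazard is bookkeeping: tracking the factors of $i$, ensuring the $(\theta_k')^2$ terms really cancel (which is precisely why one must combine Parseval with the first variation formula at the $j=k$ slot), and getting the sign in the half-angle identity right — that sign is what produces the antisymmetric cancellation in (\ref{theta3}) and what the later convexity arguments rely on.
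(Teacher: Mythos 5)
Your argument is correct and follows essentially the same route as the paper: substitute $u'=ixu$, $u''=-x^2u$ into Theorem \ref{variaa}, compare with the derivatives of $\lambda_k=e^{i\theta_k}$, and deduce (\ref{theta3}) by summing and cancelling the terms with $1\le j,k\le m$ in antisymmetric pairs. The only stylistic difference is in the middle step: the paper divides by $e^{i\theta_k}$ and takes imaginary parts, so the real diagonal term $\langle x^2v_k,v_k\rangle$ simply drops out, whereas you expand $\|xv_k\|^2$ by Parseval, cancel the $(\theta_k')^2$ terms, and pass through the cotangent form $\frac{\lambda_k+\lambda_j}{\lambda_k-\lambda_j}=-i\cot\bigl(\tfrac{\theta_k-\theta_j}{2}\bigr)$; this is equivalent bookkeeping.

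One point you should make explicit: your computation yields $\theta_k''=2\sum_{j\ne k}\frac{\sin(\theta_k-\theta_j)}{|e^{i\theta_k}-e^{i\theta_j}|^2}|\langle xv_k,v_j\rangle|^2$, which is \emph{not} literally the displayed (\ref{theta2}), whose numerator is $\sin(\theta_j-\theta_k)$. Your sign is in fact the correct one: it is the sign that appears in (\ref{theta3}) and in the paper's own passage from (\ref{theta2}) to (\ref{theta3}), and it is the sign required for the convexity statement of Theorem \ref{convsumadistint} (a quick $2\times 2$ check with $x$ the symmetric permutation matrix and $y=\diag(a,-a)$ gives $\theta''_{\max}(0)=\cot a>0$ for the larger angle, matching your formula and not the printed one). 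So the printed (\ref{theta2}) contains a sign slip that the paper silently corrects when summing; rather than asserting that your cotangent identity ``yields (\ref{theta2})'', state that you obtain (\ref{theta2}) with numerator $\sin(\theta_k-\theta_j)$, which is the version used everywhere downstream.
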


\begin{proof}
In our case $u'(t)=ix u(t)$, hence the first variation formula tells us that $
\theta_k'(t)=\langle xv_k(t),v_k(t)\rangle$. Since $u''(t)=-x^2u(t)$, the second variation formula tells us that
$$
\lambda_k''(t)= -e^{i\theta_k(t)}\langle x^2v_k(t), v_k(t)\rangle   - 2 \sum_{j\ne k} e^{i(\theta_k(t)+\theta_j(t))}\frac{|\langle x v_k(t), v_j(t)\rangle|^2}{e^{i\theta_k(t)}-e^{i\theta_j(t)}} .
$$
Comparing with $\lambda_k''(t)= e^{i\theta_k(t)}(-\theta_k'(t)^2+i\theta_k''(t))$,  we get
$$
-\langle x^2v_k(t), v_k(t)\rangle   - 2 \sum_{j\ne k} e^{i\theta_j(t)}\frac{|\langle x v_k(t), v_j(t)\rangle|^2}{e^{i\theta_k(t)}-e^{i\theta_j(t)}}=  -\theta_k'(t)^2+i\theta_k''(t).
$$
Since $x^*=x$, the term $\langle x^2 v_k,v_k\rangle$ is real, and taking the real part in the previous equation we recover (\ref{thetak}), but with a modulus. But if we take the imaginary parts, we see that
\begin{align*}
\theta_k''(t) & = -2 \sum_{j\ne k} \textrm{Im}\{\frac{e^{i\theta_j(t)}}{e^{i\theta_k(t)}-e^{i\theta_j(t)}}\}|\langle x v_k(t), v_j(t)\rangle|^2 \nonumber\\
\end{align*}
therefore
$$
\theta_k''(t)= 2 \sum_{j\ne k} \frac{ \sin(\theta_j(t)-\theta_k(t))}{|e^{i\theta_k(t)}-e^{i\theta_j(t)}|^2}|\langle x v_k(t), v_j(t)\rangle|^2.
$$
Now note that
\begin{align*}
\sum_{k=1}^m\theta_k''(t)=&2 \sum_{k=1}^m\sum_{j\ne k} \frac{ \sin(\theta_k(t)-\theta_j(t))}{|e^{i\theta_k(t)}-e^{i\theta_j(t)}|^2}|\langle x v_k(t), v_j(t)\rangle|^2\\
=&S_1(t)+S_2(t)
\end{align*}
where 
$$S_1(t)=2 \sum_{(j,k): 1\leq j,k\leq m,j \neq k} \frac{ \sin(\theta_k(t)-\theta_j(t))}{|e^{i\theta_k(t)}-e^{i\theta_j(t)}|^2}|\langle x v_k(t), v_j(t)\rangle|^2$$
and
$$
S_2(t)=2 \sum_{k=1}^m\sum_{j=m+1}^n \frac{ \sin(\theta_k(t)-\theta_j(t))}{|e^{i\theta_k(t)}-e^{i\theta_j(t)}|^2}|\langle x v_k(t), v_j(t)\rangle|^2.
$$
In the sum $S_1(t)$, for each $\{j,k\}$ such that $j\neq k$ the indices $(j,k)$ and $(k,j)$ appear in the sum. Note that

\begin{align*}
\frac{ \sin(\theta_k(t)-\theta_j(t))}{|e^{i\theta_k(t)}-e^{i\theta_j(t)}|^2}|\langle x v_k(t), v_j(t)\rangle|^2&=-\frac{ \sin(\theta_j(t)-\theta_k(t))}{|e^{i\theta_k(t)}-e^{i\theta_j(t)}|^2}|\langle x v_k(t), v_j(t)\rangle|^2\\
&=-\frac{ \sin(\theta_j(t)-\theta_k(t))}{|e^{i\theta_j(t)}-e^{i\theta_k(t)}|^2}|\langle x v_j(t), v_k(t)\rangle|^2,
\end{align*}
so that 
$$\frac{ \sin(\theta_k(t)-\theta_j(t))}{|e^{i\theta_k(t)}-e^{i\theta_j(t)}|^2}|\langle x v_k(t), v_j(t)\rangle|^2+\frac{ \sin(\theta_j(t)-\theta_k(t))}{|e^{i\theta_j(t)}-e^{i\theta_k(t)}|^2}|\langle x v_j(t), v_k(t)\rangle|^2=0.$$
Therefore the $S_1(t)=0$. 
\end{proof}

\begin{rem}\label{l1} Since $e^{Tr A}=\det(e^A)$ for any $A\in M_n(\mathbb C)$,  for $x,y$ as in Theorem \ref{variaa} we have
$$
e^{i \sum_k \theta_k(t)}=e^{Tr(\log(u(t)))}=\det(e^{\log(u(t))})=\det(e^{itx}e^y)=e^{it Tr(x)}e^{i Tr(y)}.
$$
Differentiating we see that $e^{i \sum_k \theta_k(t)}(i\sum_k \theta_k'(t))=e^{iTr(y)}e^{it Tr(x)}i \, Tr(x)$. Hence cancelling we see that 
$$
\sum_{k=1}^n \theta_k'(t)=Tr(x),
$$
and since $u(0)=e^{iy}$ it must be
$$
\sum_{k=1}^n  \theta_k(t)=t	\, Tr(x)+ Tr(y).
$$
\end{rem}

\subsection{Segments with distinct eigenvalues}

In what follows we show that the partial increasing sums of these angles are always convex mappings.

\begin{thm}\label{convsumadistint}
If $\|e^{itx}e^{iy}-1\|< \sqrt{2}$ and $u(t)=e^{itx}e^{iy}$ has distinct eigenvalues $e^{i\theta_k(t)}$ with $\theta_1(t)>\theta_2(t)>\dots>\theta_n(t)$ for all $t\in [0,t_0]$, then for $1\leq m\leq n$ 
$$t\mapsto \sum_{i=1}^m\theta_i(t)$$ 
is convex in $[0,t_0]$.
\end{thm}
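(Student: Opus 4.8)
The key is formula (\ref{theta3}) from Proposition~\ref{propconvsum}, which expresses the second derivative of the partial sum $s_m(t)=\sum_{i=1}^m\theta_i(t)$ as
\[
s_m''(t)=2\sum_{k=1}^m\sum_{j=m+1}^n \frac{\sin(\theta_k(t)-\theta_j(t))}{|e^{i\theta_k(t)}-e^{i\theta_j(t)}|^2}\,|\langle x v_k(t),v_j(t)\rangle|^2.
\]
Every factor $|\langle x v_k,v_j\rangle|^2$ is nonnegative, and $|e^{i\theta_k}-e^{i\theta_j}|^2>0$ since the eigenvalues are distinct, so it suffices to show that $\sin(\theta_k(t)-\theta_j(t))\ge 0$ for every pair with $1\le k\le m<j\le n$. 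Thus the whole problem reduces to a statement about the location of the angles: I must show that the hypothesis $\|u(t)-1\|_\infty<\sqrt2$ forces $0\le \theta_k(t)-\theta_j(t)\le\pi$ whenever $k\le m<j$, i.e. whenever $\theta_k\ge\theta_j$ in the chosen decreasing order.

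First I would translate the norm bound into a bound on the angles. Since $u(t)$ is unitary with eigenvalues $e^{i\theta_k(t)}$ and $\theta_k\in(-\pi,\pi)$ (the principal branch, valid because $\|u(t)-1\|_\infty<2$), we have $\|u(t)-1\|_\infty=\max_k|e^{i\theta_k(t)}-1|=\max_k 2|\sin(\theta_k(t)/2)|$. Hence $\|u(t)-1\|_\infty<\sqrt2$ is equivalent to $|\sin(\theta_k(t)/2)|<\tfrac{\sqrt2}{2}$ for all $k$, i.e. $|\theta_k(t)/2|<\pi/4$, that is $\theta_k(t)\in(-\pi/2,\pi/2)$ for every $k$ and every $t\in[0,t_0]$. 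Consequently, for any pair of indices, $\theta_k(t)-\theta_j(t)\in(-\pi,\pi)$, and in particular $\sin$ is nonnegative precisely when $\theta_k(t)-\theta_j(t)\ge0$. Since the angles are ordered decreasingly, $k\le m<j$ gives $\theta_k(t)\ge\theta_j(t)$, so $\sin(\theta_k(t)-\theta_j(t))\ge 0$, and we may in fact drop to the case where it is $>0$ unless the corresponding matrix entry vanishes. This makes every summand in $s_m''(t)$ nonnegative, hence $s_m''(t)\ge 0$ on $[0,t_0]$, and therefore $s_m$ is convex there.

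The only point requiring a little care — and what I expect to be the main obstacle — is the smoothness/regularity bookkeeping needed to apply Proposition~\ref{propconvsum}: that proposition was stated under the standing assumption that the eigenvalues are \emph{distinct} for all $t$, which is exactly the hypothesis of the present theorem, so (\ref{theta3}) is available verbatim and the $\theta_k$ are smooth; one just has to note that convexity of a $C^2$ function on a closed interval follows from nonnegativity of its second derivative on the interior together with continuity up to the endpoints. (The genuinely delicate issue — removing the distinctness hypothesis by a limiting argument — is deferred to the later Theorem~\ref{convsumdistgeneral} and is not needed here.) So the proof is essentially: rewrite the norm condition as $\theta_k(t)\in(-\pi/2,\pi/2)$, conclude $\sin(\theta_k-\theta_j)\ge 0$ for $k\le m<j$, and read off $s_m''\ge 0$ from (\ref{theta3}).
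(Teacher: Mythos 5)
Your proposal is correct and follows essentially the same route as the paper: both invoke formula (\ref{theta3}) of Proposition \ref{propconvsum}, translate the bound $\|u(t)-1\|_\infty<\sqrt2$ into $\theta_k(t)\in(-\pi/2,\pi/2)$, and conclude that each summand $\sin(\theta_k-\theta_j)\,|\langle xv_k,v_j\rangle|^2/|e^{i\theta_k}-e^{i\theta_j}|^2$ with $k\le m<j$ is nonnegative, hence $\sum_{i\le m}\theta_i$ is convex. The only difference is that you spell out the elementary equivalence $\|u-1\|_\infty<\sqrt2\iff|\theta_k|<\pi/2$, which the paper merely asserts.
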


\begin{proof}
In \eqref{theta3} the summands are  
$$ \frac{ \sin(\theta_k(t)-\theta_j(t))}{|e^{i\theta_k(t)}-e^{i\theta_j(t)}|^2}|\langle x v_k(t), v_j(t)\rangle|^2$$
with $k<j$. Note that $\|u(t)-1\|\le \sqrt{2}$ is equivalent $-\pi/2\le \theta_i(t)\le \pi/2$ for all $i$, thus $0\le\theta_k(t)-\theta_j(t)\le \pi$ for all $j$, and this implies $\sin(\theta_k(t)-\theta_j(t))\geq 0$. Therefore the summands are nonnegative, so the second derivative is non-negative.
\end{proof}

\subsection{The condition $\sum\theta_k''(t)=0$}

\begin{rem}If $\|e^{itx}e^{iy}-1\|\le \sqrt{2}$ for $t\in [0,t_0]$ then it must be $\|y\|\le \frac{\pi}{2}$ and $\|tx\|< \pi$ for all $t\in [0,t_0]$. In fact, at $t=0$ we have $\|e^{iy}-1\|\le\sqrt{2}$ which bounds the norm of $y$, and then by the triangle inequality
$$
\|e^{itx}-1\|\le \|e^{itx}-e^{-iy}\|+\|e^{-iy}-1\|\le 2\sqrt{2}.
$$
Now if $|e^{i\theta}-1|\le 2\sqrt{2}$, an elementary computation shows that $|\cos(\theta)|\le \sqrt{2}-1$, which in turn implies that $|\theta|\le 0,87\pi$.
\end{rem}

Let $x,y$ be as in Theorem \ref{convsumadistint}. In particular we have 
\begin{equation}\label{segundaa}
\theta_1''(t)  = 2 \sum_{j=2}^n  \frac{ \sin(\theta_1(t)-\theta_j(t))}{|e^{i\theta_1(t)}-e^{i\theta_j(t)}|^2}|\langle x v_1(t), v_j(t)\rangle|^2.
\end{equation}
Then
\begin{lem}\label{caso1}
For fixed $t\in [0,t_0]$, the following are equivalent: 
\begin{enumerate}
\item[(a)] $\theta_1''(t)=0$
\item[(b)] $xv_1(t)=\theta_1'(t)v_1(t)$
\item[(c)] $yv_1(t)=(\theta_1(t)-t\theta_1'(t))v_1(t)$.
\end{enumerate}
In particular $v_1(t)$ is a simultaneous eigenvector of $x$ and $y$.
\end{lem}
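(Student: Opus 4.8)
The plan is to prove separately the two equivalences $(a)\Leftrightarrow(b)$ and $(b)\Leftrightarrow(c)$; the last assertion is then automatic, since $(b)$ exhibits $v_1(t)$ as an eigenvector of $x$ and $(c)$ as an eigenvector of $y$. Throughout I use that $\|u(t)-1\|<\sqrt2$ is equivalent to $\theta_i(t)\in(-\pi/2,\pi/2)$ for every $i$, together with the bounds $\|y\|\le\pi/2$ and $\|tx\|<\pi$ from the Remark preceding the statement. For $(a)\Leftrightarrow(b)$ I would read off \eqref{segundaa}: for each $j\ge2$ the (distinct) arguments $\theta_1(t)>\theta_j(t)$ lie in $(-\pi/2,\pi/2)$, so $\theta_1(t)-\theta_j(t)\in(0,\pi)$ and $\sin(\theta_1(t)-\theta_j(t))>0$; hence every summand in \eqref{segundaa} is nonnegative, and $\theta_1''(t)=0$ holds if and only if $\langle x v_1(t),v_j(t)\rangle=0$ for all $j=2,\dots,n$. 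Since $\{v_k(t)\}_k$ is an orthonormal basis, this is the same as $x v_1(t)=\langle x v_1(t),v_1(t)\rangle v_1(t)$, and by the first variation formula \eqref{thetak} the scalar equals $\theta_1'(t)$, which is exactly $(b)$.

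For $(b)\Rightarrow(c)$, from $u(t)v_1(t)=e^{i\theta_1(t)}v_1(t)$ and $u(t)=e^{itx}e^{iy}$ we get $e^{iy}v_1(t)=e^{-itx}\bigl(e^{i\theta_1(t)}v_1(t)\bigr)$; assuming $(b)$, $v_1(t)$ is an eigenvector of $x$ with eigenvalue $\theta_1'(t)$, so $e^{-itx}v_1(t)=e^{-it\theta_1'(t)}v_1(t)$ and therefore $e^{iy}v_1(t)=e^{i(\theta_1(t)-t\theta_1'(t))}v_1(t)$. As $y=y^*$ with $\|y\|\le\pi/2$, the spectral subspaces of $e^{iy}$ coincide with those of $y$, so $v_1(t)$ is an eigenvector of $y$ for the unique eigenvalue $\mu\in[-\pi/2,\pi/2]$ with $e^{i\mu}=e^{i(\theta_1(t)-t\theta_1'(t))}$. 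It remains to identify $\mu$ with $\theta_1(t)-t\theta_1'(t)$, i.e.\ to check that $\theta_1(t)-t\theta_1'(t)\in(-\pi/2,\pi/2)$, and this I would obtain by a connectedness argument along the subsegment: since $v_1(t)$ is an eigenvector of $x$, it is an eigenvector of each $u(s)=e^{isx}e^{iy}$, $s\in[0,t]$, for the eigenvalue $e^{i\phi(s)}$ with $\phi(s)=\theta_1(t)+(s-t)\theta_1'(t)$ affine in $s$; because $\|u(s)-1\|<\sqrt2$, every eigenvalue-argument of $u(s)$ lies in $(-\pi/2,\pi/2)$, so $\phi(s)\in\bigcup_{k\in\Z}(2\pi k-\pi/2,\,2\pi k+\pi/2)$ for every $s\in[0,t]$, and since $\phi$ is continuous with $\phi(t)=\theta_1(t)$ in the component $k=0$, $\phi$ never leaves it; in particular $\phi(0)=\theta_1(t)-t\theta_1'(t)\in(-\pi/2,\pi/2)$, which gives $(c)$.

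For $(c)\Rightarrow(b)$ (for $t>0$), from $(c)$ we have $e^{iy}v_1(t)=e^{i(\theta_1(t)-t\theta_1'(t))}v_1(t)$, which together with $u(t)v_1(t)=e^{i\theta_1(t)}v_1(t)$ forces $e^{itx}v_1(t)=e^{it\theta_1'(t)}v_1(t)$; since $\|tx\|<\pi$ the spectral subspaces of $e^{itx}$ coincide with those of $tx$, so $(tx)v_1(t)=\nu v_1(t)$ for the unique $\nu\in(-\pi,\pi)$ with $e^{i\nu}=e^{it\theta_1'(t)}$, and then $\nu=\langle (tx)v_1(t),v_1(t)\rangle=t\,\theta_1'(t)$ by \eqref{thetak}, so $x v_1(t)=\theta_1'(t)v_1(t)$. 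The main obstacle is precisely the $2\pi\Z$‑indeterminacy met in $(b)\Rightarrow(c)$: the operator-norm hypotheses determine the relevant eigenvalue of $y$ (and of $tx$) only modulo $2\pi$, and what removes the ambiguity is the continuity of the affine function $\phi$ along $s\mapsto e^{isx}e^{iy}$, together with the fact that the admissible window $(-\pi/2,\pi/2)$ of arguments is isolated from its $2\pi$‑translates. Everything else is a direct manipulation of the first/second variation formulas.
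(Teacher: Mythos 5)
Your proof is correct, and its skeleton matches the paper's: (a)$\Leftrightarrow$(b) is read off from \eqref{segundaa} together with the first variation formula exactly as in the paper, and both directions between (b) and (c) go through the identities $e^{iy}v_1(t)=e^{i(\theta_1(t)-t\theta_1'(t))}v_1(t)$ and $e^{itx}v_1(t)=e^{it\theta_1'(t)}v_1(t)$. The genuine difference is how you kill the $2\pi\Z$-ambiguity in (b)$\Rightarrow$(c): the paper notes that (b) and the congruence give $(y+tx)v_1(t)=(\theta_1(t)+2k\pi)v_1(t)$ and rules out $k\ne 0$ by the crude norm estimate $\|y+tx\|\le t\|x\|+\tfrac{\pi}{2}<\tfrac{3\pi}{2}$ (using the Remark before the lemma), whereas you run a connectedness argument: $v_1(t)$ is a common eigenvector of $e^{isx}$ and $e^{iy}$, hence of every $u(s)$, $s\in[0,t]$, with affine argument $\phi(s)=\theta_1(t)+(s-t)\theta_1'(t)$, and since $\|u(s)-1\|<\sqrt2$ forces $\cos\phi(s)>0$ for all $s$ while $\phi(t)=\theta_1(t)\in(-\pi/2,\pi/2)$, the image of $\phi$ cannot leave the $k=0$ component, so $\phi(0)=\theta_1(t)-t\theta_1'(t)\in(-\pi/2,\pi/2)$. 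Your route is arguably cleaner (it avoids the $0.87\pi$ estimate and even yields the sharper localization of $\theta_1(t)-t\theta_1'(t)$), and your derivation of $e^{iy}v_1=e^{i(\theta_1-t\theta_1')}v_1$ by simply multiplying $u(t)v_1=e^{i\theta_1}v_1$ by $e^{-itx}$ is more direct than the paper's detour through the orthogonality relations $\langle e^{iy}v_j(t),v_1(t)\rangle=0$; the price is that your argument uses the standing hypothesis on the whole subinterval $[0,t]$, which is indeed available here. Your restriction of (c)$\Rightarrow$(b) to $t>0$ is not a defect but a necessary caveat: at $t=0$ condition (c) holds automatically (since $\|y\|\le\pi/2$) while (a) and (b) need not, and the paper's own proof of that implication silently degenerates at $t=0$ as well.
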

\begin{proof}
If the eigenvalues do not interlace, we have $\sin(\theta_1(t)-\theta_j(t))>0$ for $j>1$. The case of $\theta_1''(t)=0$ is then only possible by (\ref{segundaa}) if $\langle xv_1,v_j\rangle=0$ for all $j\ge 2$. But in this case it must be $xv_1(t)= \mu(t)v_1(t)$ for some real analytic function $\mu$. Plugging this in the first variation formula, for $k=1$, we see that $\mu(t)=\theta_1'(t)$, that is $xv_1(t)=\theta_1'(t)v_1(t)$, which proves $(b)$. Now assumme that $(b)$ holds, from $uv_1=\lambda_1v_1$, we get
\begin{align*}
0& = e^{i\theta_j(t)}\langle v_j(t),v_1(t)\rangle =\langle e^{itx}e^{iy} v_j(t),v_1(t)\rangle=\langle e^{iy} v_j(t),e^{-itx}v_1(t)\rangle\\
&=\langle e^{iy} v_j(t),e^{-it\theta_1'(t)}v_1(t)\rangle=e^{i t\theta_1'(t)}\langle e^{iy} v_j(t),v_1(t)\rangle.
\end{align*}
Thus $\langle v_j(t),e^{-iy}v_1(t)\rangle=0$ for all $j> 1$, and this is only possible if $e^{-iy}v_1(t)=\beta(t)v_1(t)$; note that $|\beta(t)|=1$. Now
$$
e^{i\theta_1(t)}v_1(t)=e^{itx}e^{iy}v_1(t)=e^{itx}\overline{\beta(t)}v_1(t)=\overline{\beta(t)}e^{it\theta_1'(t)}v_1(t),
$$
which implies that $e^{iy}v_1(t)=e^{i(\theta_1(t)-t\theta_1'(t))}v_1(t)$. Note that since at $t=0$ we have $\|e^{iy}-1\|\le \sqrt{2}$, it must be  $\|y\|\le\frac{\pi}{2}$, hence there exists $k\in\mathbb Z$ such that $yv_1(t)=(\theta_1(t)-t\theta_1'(t)+2k\pi)v_1(t)$. Hence $(y+tx)v_1(t)=(\theta_1(t)+2k\pi)v_1(t)$, and taking norms we see that
$$
|\theta_1(t)+2k\pi|=\|y+tx\|\le  t\|x\|+ \frac{\pi}{2}<\frac{3\pi}{2}
$$
by the previous remark. Since $-\frac{\pi}{2}\le \theta_1(t)\le  \frac{\pi}{2}$, this is only possible if $k=0$. Then $(y+tx)v_1(t)=\theta_1(t)v_1(t)$ which proves $(c)$. Now if $(c)$ holds, then
$$
e^{i\theta_1(t)}v_1(t)=e^{itx}e^{iy}v_1(t)=e^{itx}e^{i\theta_1(t)}e^{-it\theta_1'(t)}v_1(t),
$$
hence $e^{itx}v_1(t)=e^{it\theta_1'(t)}v_1(t)$. Then, since $\|tx\|<\pi$, by the first variation formula (\ref{thetak}) we also have $|t\theta_1'(t)|<\pi$, which implies that $xv_1(t)=\theta_1'(t)v_1(t)$. This certainly implies $(a)$ by (\ref{segundaa}).
\end{proof}

\begin{prop}Let $x,y$ be as in Theorem \ref{convsumadistint}. If $\sum_{k=1}^m\theta_k(s)''=0$ for some $t\in [0,t_0]$, and all $1\le m\le n-1$, then the span of $\{v_1(s),\cdots,v_m(s)\}$ is invariant for both $x$ and $y$, and $[x,y]=0$.
\end{prop}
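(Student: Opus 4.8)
The plan is to show that, at the given parameter $s$, both $x$ and $y$ are diagonal in the orthonormal eigenbasis $\{v_k(s)\}_{k=1}^n$ of $u(s)$; simultaneous diagonalizability in a common orthonormal basis then yields $[x,y]=0$ and the invariance of every $\spa\{v_1(s),\dots,v_m(s)\}$ automatically.

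First I would treat $x$. As in the proof of Theorem \ref{convsumadistint}, the condition $\|u(t)-1\|<\sqrt 2$ gives $-\pi/2\le\theta_i(s)\le\pi/2$ for all $i$, so for $k<j$ one has $0<\theta_k(s)-\theta_j(s)<\pi$ and hence $\sin(\theta_k(s)-\theta_j(s))>0$. Consequently, in formula \eqref{theta3} every summand of $\sum_{k=1}^m\theta_k''(s)$ is nonnegative; since the sum vanishes, each summand vanishes, i.e. $\langle xv_k(s),v_j(s)\rangle=0$ whenever $k\le m<j$. Letting $m$ run through $1,\dots,n-1$ forces $\langle xv_k(s),v_j(s)\rangle=0$ for every pair $k<j$, and by $x^*=x$ also for $k>j$; thus $x$ is diagonal in the orthonormal basis $\{v_k(s)\}$, and the first variation formula \eqref{thetak} identifies the eigenvalue, $xv_k(s)=\theta_k'(s)v_k(s)$ for each $k$. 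In particular each $\spa\{v_1(s),\dots,v_m(s)\}$ is $x$-invariant.

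For $y$ I would argue exactly as in the implication (b)$\Rightarrow$(c) of Lemma \ref{caso1}. From $u(s)=e^{isx}e^{iy}$ together with $xv_k(s)=\theta_k'(s)v_k(s)$ we obtain
\[
e^{iy}v_k(s)=e^{-isx}u(s)v_k(s)=e^{i\theta_k(s)}e^{-isx}v_k(s)=e^{i(\theta_k(s)-s\theta_k'(s))}v_k(s),
\]
so every $v_k(s)$ is an eigenvector of the unitary $e^{iy}$. Since $\|y\|\le\pi/2$ (as recorded in the Remark preceding Lemma \ref{caso1}), the spectrum of $e^{iy}$ avoids $-1$ and $y=-i\log(e^{iy})$ for the principal branch of the logarithm; being a function of $e^{iy}$, the matrix $y$ has the same eigenvectors, so $yv_k(s)=\phi_k v_k(s)$ for suitable real $\phi_k$. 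Hence $y$ is also diagonal in $\{v_k(s)\}$, each $\spa\{v_1(s),\dots,v_m(s)\}$ is $y$-invariant, and since $x$ and $y$ are simultaneously diagonalized, $[x,y]=0$.

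The only point that requires care is the last transfer, from ``$v_k(s)$ is an eigenvector of $e^{iy}$'' to ``$v_k(s)$ is an eigenvector of $y$'': this is precisely where the neighbourhood hypothesis enters, through the bound $\|y\|\le\pi/2$ that makes the principal logarithm recover $iy$, and it is the same mechanism already used in Lemma \ref{caso1} (one could instead pin down the eigenvalue exactly, $yv_k(s)=(\theta_k(s)-s\theta_k'(s))v_k(s)$, by the norm estimate $\|y+sx\|<3\pi/2$ as in that proof). Everything else is a direct reading of the formulas in Proposition \ref{propconvsum}.
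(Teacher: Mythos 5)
Your proposal is correct and follows essentially the same route as the paper: the strict positivity of the sine coefficients under $\|u(t)-1\|<\sqrt 2$ forces every summand in \eqref{theta3} to vanish for each $m$, giving $\langle xv_k(s),v_j(s)\rangle=0$ for all $k\ne j$, after which the argument of Lemma \ref{caso1} transfers the diagonalization to $y$ and yields $[x,y]=0$ together with the invariance of the spans. Your only deviation is cosmetic: you conclude the $y$-step via the principal-logarithm functional calculus (eigenvectors of $e^{iy}$ are eigenvectors of $y=-i\log(e^{iy})$ since $\|y\|\le\pi/2$), which is an equally valid shortcut for the eigenvalue-pinning argument in the lemma.
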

\begin{proof}
Since $\theta_k(t)-\theta_j(t)>0$ for $j>k$, in equation (\ref{theta3}) all the terms must vanish, then is easy to check that for each $k=1,\dots m$ (following the proof of the previous lemma) that it must be $\langle xv_k(s),v_j(s)\rangle=0$ for all $j\ne k$, $j=1,\dots n$.  Then we have a common basis for diagonalizing $x,y$ therefore $[x,y]=0$.
\end{proof}

\subsection{Convexity, general case}
\medskip

We now prove that segments with repeated eigenvalues can be approximated by segments with different eigenvalues for all $t$.

\begin{prop}\label{aproxdistint} 
Let $u(t)=e^{itx}e^{iy}$ satisfy $\|u(t)-1\|< r<2$ for all $t\in (a,b)$. Then for each $t\in (a,b)$ there exists $y_n\to y$ such that for each $n$ the segment $u_n(t)=e^{itx}e^{iy_n}$ satisfies $\|u_n(t)-1\|<r$ and has distinct eigenvalues in some open interval $I\subset (a,b)$ around $t$. 
\end{prop}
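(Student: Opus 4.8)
The plan is to obtain the perturbed matrices $y_n$ by moving $y$ along a single well-chosen straight segment $y_n = y + \varepsilon_n D$, with $D = D^*$ and $\varepsilon_n \to 0$, and then to control everything through real-analyticity in the parameter $\varepsilon$ together with elementary continuity and compactness arguments. Fix $t \in (a,b)$; if $u(t)$ already has $n$ distinct eigenvalues there is nothing to do (take $y_n \equiv y$), so assume from now on that it does not.

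First I would exhibit one self-adjoint $y'$ for which $e^{itx}e^{iy'}$ has $n$ distinct eigenvalues. Pick any unitary $v_0$ with $n$ distinct eigenvalues (for instance a diagonal unitary), and, using that $w \mapsto e^{iw}$ maps the self-adjoint matrices onto $\U(n)$, choose $y'$ with $e^{iy'} = e^{-itx} v_0$; then $e^{itx}e^{iy'} = v_0$. Set $D = y' - y$, which is self-adjoint and nonzero: were $y' = y$, then $u(t) = e^{itx}e^{iy'} = v_0$, contradicting that $v_0$ has distinct eigenvalues while $u(t)$ does not. Now consider
\[
F(\varepsilon) = \operatorname{disc}\bigl( e^{itx} e^{i(y + \varepsilon D)} \bigr), \qquad \varepsilon \in \R,
\]
the discriminant of the characteristic polynomial. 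The entries of $e^{i(y + \varepsilon D)}$ are entire functions of $\varepsilon$ and the discriminant is a polynomial in the entries of a matrix, so $F$ is real-analytic on $\R$; since $F(1) = \operatorname{disc}(v_0) \neq 0$, it is not identically zero and therefore vanishes only on a discrete set. Hence there is a sequence of nonzero reals $\varepsilon_n \to 0$ with $F(\varepsilon_n) \neq 0$; put $y_n = y + \varepsilon_n D$, so $y_n \to y$ and $u_n(t) = e^{itx}e^{iy_n}$ has $n$ distinct eigenvalues for every $n$.

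It remains to pass from ``distinct at $t$'' to ``distinct on an interval'' and to recover the norm bound. The map $s \mapsto \operatorname{disc}(u_n(s))$ is continuous and nonzero at $s = t$, so it is nonzero on an open interval $I_n \subset (a,b)$ around $t$; there $u_n$ has $n$ distinct eigenvalues, equivalently distinct angles $\theta_1^{(n)}(s) > \cdots > \theta_n^{(n)}(s)$ in $(-\pi,\pi)$, since $\|u_n(s) - 1\| < r < 2$ keeps $-1$ out of the spectrum. For the norm bound, unitarity of $e^{isx}$ yields the $s$-independent estimate
\[
\|u_n(s) - u(s)\| = \|e^{isx}(e^{iy_n} - e^{iy})\| = \|e^{iy_n} - e^{iy}\| \le \|y_n - y\| = |\varepsilon_n|\,\|D\|.
\]
Fix a compact interval $J = [t - \delta, t + \delta] \subset (a,b)$ containing $t$ in its interior; by compactness $\rho := \max_{s \in J}\|u(s) - 1\| < r$. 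Choosing from the outset the $\varepsilon_n$ (which accumulate at $0$) so small that in addition $|\varepsilon_n|\,\|D\| < r - \rho$, we get for all $s \in J$
\[
\|u_n(s) - 1\| \le \|u_n(s) - u(s)\| + \|u(s) - 1\| < (r - \rho) + \rho = r,
\]
so, after shrinking $I_n$ to lie inside $J$, the interval $I = I_n$ has all the required properties, for every $n$.

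I expect the genuine difficulty to be concentrated in the first step: producing a direction $D$ along which the eigenvalues actually spread apart. Once the analytic function $F$ is known to be nonzero somewhere, the isolated-zero principle delivers the sequence $\varepsilon_n$ automatically, and the remaining arguments are routine. Note that this route never requires the discriminant of the unperturbed path $s \mapsto u(s)$ to have isolated zeros — the line $\varepsilon \mapsto y + \varepsilon D$ escapes the degenerate locus regardless of how badly $u$ itself degenerates.
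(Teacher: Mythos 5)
Your construction does prove something true: for each $n$ the matrix $u_n(t)=e^{itx}e^{iy_n}$ has $n$ distinct eigenvalues at the fixed parameter value $t$, hence (by continuity of the discriminant) on some open interval $I_n$ around $t$, and your norm control via $\|e^{iy_n}-e^{iy}\|\le\|y_n-y\|$ together with compactness of a subinterval $J\subset(a,b)$ is correct. The genuine gap is the quantifier on the interval: your $I_n$ depends on $n$ and nothing prevents it from shrinking to $\{t\}$ as $\varepsilon_n\to 0$. The proposition, as it is used in Theorems \ref{convsumdistgeneral} and \ref{striconv}, needs one open interval $I$ around $t$, fixed \emph{before} the sequence, on which \emph{every} approximant $u_n$ simultaneously has distinct eigenvalues; only then can one say that the sums $\sum_{k\le m}\theta_{k,n}$ are convex on $I$ for all $n$ and pass convexity to the pointwise limit. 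Convexity on intervals that collapse to a point yields nothing about the limit function near $t$.

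A one-parameter analytic perturbation cannot deliver this uniformity. Writing $G(\varepsilon,s)=\mathrm{disc}\bigl(e^{isx}e^{i(y+\varepsilon D)}\bigr)$, you only know $G(1,t)\neq 0$, which controls the slice $s=t$: for a fixed $s\neq t$ the function $\varepsilon\mapsto G(\varepsilon,s)$ could even vanish identically (a line of self-adjoint matrices can lie entirely inside the degenerate locus), and even when it does not, the zero set of $G$ in the $(\varepsilon,s)$-plane is typically a curve whose projection to the $\varepsilon$-axis covers a whole neighbourhood of $0$; for every small $\varepsilon$ there may be some $s$, arbitrarily close to $t$ as $\varepsilon\to 0$, where two eigenvalues of $e^{isx}e^{i(y+\varepsilon D)}$ collide, which forces your $I_n$ to shrink. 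This is exactly the difficulty the paper's proof is built to overcome: it perturbs in the full slice $A=\{z=z^*:\ z\perp x\}$, uses that the unitaries with a repeated eigenvalue form a set $S$ of codimension $3$ (von Neumann--Wigner, via \cite[p.142]{laxla}), pulls $S$ back under the local diffeomorphism $\psi(s,z)=e^{isx}e^{iz}e^{iy}$ on $I\times A_\epsilon$ with $I$ fixed, and then projects along the one-dimensional $s$-direction, so the bad set of parameters $z$ still has codimension $\ge 2$ in $A_\epsilon$ and can be avoided by some $z_n\to 0$; this gives distinct eigenvalues on the whole fixed $I$ for every $n$ at once. Your closing remark that the unperturbed discriminant need not have isolated zeros is true but beside the point: the missing ingredient is the simultaneous (in $s$) removal of degeneracies on a fixed interval, and for that the codimension count over a multi-dimensional family of perturbations is essential, not an optional refinement.
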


\begin{proof}
Denote 
$$A=\{z:z=z^*\mbox{ and }z\perp x\}.$$
We define the map $\psi:\R\times A\to U_n(\mathbb C)$ as
$$\psi(t,z)=e^{itx}e^{iz}e^{iy}.$$
The differential at a point $(t,0)$ with $t\in\R$ is 
$$
\psi_{*(t,0)}(s,z)=e^{itx}isxe^{iy}+e^{itx}ize^{iy}=ie^{itx}(sx
+z)e^{iy}.
$$
We note that since $(s,z)\mapsto z+sx$ is an isomorphism from $\mathbb R\times A$ onto $T_{e^{tx}e^y}U_n(\mathbb C)\simeq A\oplus \{x\}$ the map $\psi$ is a local diffeomorphism around $(t,0)$ with $t\in (a,b)$: there exists an $\epsilon>0$ such that if $I=(t-\varepsilon,t+\varepsilon)$ and $A_{\epsilon}=A\cap B(0,\epsilon)$, then 
$$\psi_{\epsilon}=\psi\vert_{I\times A_{\epsilon}}$$
is a diffeormorphism onto its image and moreover  $\|\psi_{\epsilon}(t,z)-1\|< r$ for all $(t,z)\in  I\times A_{\epsilon}$ (since $\psi(t,0)=e^{itx}e^{iy})$.  From now we denote this restricted map with $\psi$. Now, it is well known that the set of Hermitian matrices with at least one repeated eigenvalue has codimension $3$ inside the Hermitian matrices \cite[p.142]{laxla}. 

Then the assertion is also true for the the open ball $\|x\|<\frac{\pi}{2}$ inside the Hermitian matrices. This ball is diffeomorphic with the (relatively) open set
$$
U_n(\mathbb C)\cap\{u: \|u-1\|<2\}=:U_0\subset U_n(\mathbb C)
$$
by means of the exponential map $x\mapsto e^{ix}$. Then it follows that the set $S$ of unitary matrices in $U_0$ with repeated eigenvalues has codimension 3 in $U_0$.

Therefore $\psi^{-1}(S\cap U_0)$ has codimension at least $3$ in $I\times A_{\epsilon}$. We denote by 
$$
p:I\times A_{\epsilon}\to A_{\epsilon}
$$ 
the projection onto the second factor and note that $p(\psi^{-1}(S\cap U_n(\mathbb C))$ has codimension at least $2$ in $A_{\epsilon}$, $p$ being a smooth submersion with one dimensional kernel. Therefore, there is $z_n\to 0$ such that $z_n\in A_{\epsilon}$ and such that $z_n\notin p(\psi^{-1}(S\cap U))$, which means that $\psi(t,z_n)=e^{itx}e^{iz_n}e^{iy}$ has distinct eigenvalues for all $t\in I$. If we set $y_n$ such that $e^{iy_n}=e^{iz_n}e^{iy}$ then the curve $u_n(t)=e^{itx}e^{iy_n}$ satisfies the conclusions of the proposition.  
\end{proof}

\bigskip

\subsection{Doubling the spectrum}

Let $J$ be a complex conjugation on $\C^n$ which we denote with $\overline{u}=JuJ$. Note that if $(e_i)_{i=1}^n$ is an orthonormal basis of $\C^n$ then a complex conjugation is defined by $J:\sum_{i=1}^n\alpha_ie_i\mapsto\overline{\alpha_i}e_i$. In this case the matrix of $\overline{u}$ is the complex conjugate of the matrix of $u$. The representation $u\mapsto JuJ$ is isomorphic to the dual of the standard representation $u\mapsto u$. We consider the representation of $U_n(\mathbb C)$ on $\C^{2n}$ given by
$$\rho(u)=u\oplus\overline{u}.$$

Recall that the \textit{singular values} $\{\sigma_i(x)\}_{i=1,\dots,n}$ of $x\in M_n(\mathbb C)$ are the eigenvalues of $|x|=\sqrt{x^*x}\ge 0$, ordered in a non-increasing fashion.

\begin{lem}\label{lemaconjug}
Let $u\in U_n(\mathbb C)$ with eigenvalues $e^{i\theta_1},\dots,e^{i\theta_n}$ and let $\sigma_1\geq \sigma_2\geq\dots\geq\sigma_n\geq 0$ be the singular values of 
$$-i\log(u).$$
Then the eigenvalues of $\overline{u}$ are  $e^{-i\theta_1},\dots,e^{-i\theta_n}$ and the eigenvalues of $\rho(u)=u\oplus\overline{u}$ are $e^{i\sigma_1},\dots,e^{i\sigma_n},e^{-i\sigma_n},\dots,e^{-i\sigma_1}$ with $\sigma_1\geq\sigma_2\geq\dots\sigma_n\geq-\sigma_n\geq\dots\geq -\sigma_1$.
\end{lem}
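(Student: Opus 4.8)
The plan is to break the statement into three independent assertions and verify each by a short spectral computation. First I would treat the eigenvalues of $\overline{u}$. Since $u$ is unitary it is diagonalizable: there is an orthonormal basis $(w_k)$ with $uw_k = e^{i\theta_k}w_k$. Applying the complex conjugation $J$ to this equation and using that $J$ is conjugate-linear with $J^2 = \ident$, I get $\overline{u}(Jw_k) = JuJ(Jw_k) = Ju w_k = J(e^{i\theta_k}w_k) = e^{-i\theta_k}(Jw_k)$. Since $J$ maps an orthonormal basis to an orthonormal basis (for the standard conjugation attached to that basis, this is immediate; in general $J$ is antiunitary so it preserves orthonormality), the vectors $Jw_k$ form an eigenbasis for $\overline{u}$, and its spectrum is $\{e^{-i\theta_k}\}$. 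This is the ``dual representation'' remark made just above the lemma, made concrete.

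Next I would identify the $\theta_k$ in terms of singular values of $-i\log(u)$. Write $z = -i\log(u)$, which is Hermitian with eigenvalues exactly the $\theta_k$ (using the principal branch, so the $\theta_k$ are the arguments in $(-\pi,\pi]$). Then $|z| = \sqrt{z^*z} = \sqrt{z^2}$ has eigenvalues $|\theta_k|$, so the singular values $\sigma_1 \ge \cdots \ge \sigma_n \ge 0$ of $z$ are precisely the numbers $|\theta_k|$ sorted in non-increasing order. Hence the multiset $\{\theta_k\}$ of eigenvalues of $z$, being a set of reals whose absolute values are $\{\sigma_i\}$, is contained in $\{\pm\sigma_1,\dots,\pm\sigma_n\}$ (with the caveat that when some $\theta_k$ or some $\sigma_i$ vanishes the sign is immaterial).

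Finally I would assemble the spectrum of $\rho(u) = u \oplus \overline{u}$. Its eigenvalue multiset is the union of the eigenvalue multisets of the two summands, namely $\{e^{i\theta_k}\} \cup \{e^{-i\theta_k}\}$. Since $\{\theta_k\} = \{\varepsilon_k \sigma_{\pi(k)}\}$ for appropriate signs $\varepsilon_k$ and a permutation $\pi$, the combined multiset $\{\theta_k\} \cup \{-\theta_k\}$ is symmetric under negation and its set of absolute values is $\{\sigma_i\}$ with the original multiplicities doubled at nonzero values; therefore it equals $\{\sigma_1,\dots,\sigma_n,-\sigma_n,\dots,-\sigma_1\}$ as a multiset. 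Exponentiating, the eigenvalues of $\rho(u)$ are $e^{i\sigma_1},\dots,e^{i\sigma_n},e^{-i\sigma_n},\dots,e^{-i\sigma_1}$, listed against the non-increasing ordering $\sigma_1 \ge \cdots \ge \sigma_n \ge -\sigma_n \ge \cdots \ge -\sigma_1$, as claimed.

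The only genuinely delicate point is bookkeeping with signs and multiplicities when eigenvalues vanish or coincide: the clean statement ``$\{\theta_k\}\cup\{-\theta_k\} = \{\pm\sigma_i\}$ as multisets'' needs the observation that pairing $\theta_k$ with $-\theta_k$ automatically produces a negation-symmetric multiset, so no choice of signs for the individual $\theta_k$ affects the final answer — I would state this explicitly rather than chase cases. Everything else is a one-line consequence of antiunitarity of $J$ and of the definition of singular values, so I expect no real obstacle beyond this careful phrasing.
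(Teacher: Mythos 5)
Your proof is correct and follows essentially the same route as the paper's (much terser) argument: conjugate-linearity of $J$ gives the spectrum of $\overline{u}$, and the spectrum of $u\oplus\overline{u}$ is then the negation-symmetric doubling $\{e^{\pm i\theta_k}\}$, which matches $\{e^{\pm i\sigma_j}\}$ since the $\sigma_j$ are the sorted absolute values of the eigenvalues $\theta_k$ of the Hermitian matrix $-i\log(u)$. The extra care you take with the multiset/sign bookkeeping is exactly the detail the paper leaves implicit in ``The lemma follows.''
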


\begin{proof}
The first assertion follows from $J\lambda uJ=\overline{\lambda}JuJ$ for $\lambda\in\C$. If the eigenvalues of $u$ are $e^{i\theta_1},\dots,e^{i\theta_n}$ we define $\theta_j=-\theta_{j-n}$ for $j=n+1,\dots,2n$. Then the eigenvalues of $\rho(u)=u\oplus \overline{u}$ are $e^{i\theta_1},\dots,e^{i\theta_{2n}}$. The lemma follows. 
\end{proof}

\begin{thm}\label{convsumdistgeneral}
Let $u(t)=e^{itx}e^{iy}$ with $\|u(t)-1\|< \sqrt{2}$ in $[0,t_0]$ have eigenvalues $e^{i\theta_k(t)}$ with $\theta_1(t)\geq\theta_2(t)\geq\dots\geq\theta_n(t)$, and denote $\sigma_1(t)\geq\sigma_2(t)\geq\dots\geq\sigma_n(t)\ge 0$ the singular values of $-i\log(u)$. Then for $1\leq m\leq n$ 
$$t\mapsto\sum_{i=1}^m\theta_i(t)$$ 
is convex in $[0,t_0]$,
$$t\mapsto\sum_{i=m}^n\theta_i(t)$$
is concave in $[0,t_0]$, and
$$t\mapsto\sum_{i=1}^m\sigma_i(t)$$
is convex in $[0,t_0]$,
\end{thm}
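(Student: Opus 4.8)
The plan is to reduce all three assertions to the distinct--eigenvalue case already treated in Theorem~\ref{convsumadistint}: the first statement by an approximation argument, the second by subtracting from the affine total of Remark~\ref{l1}, and the third by the doubling construction of Lemma~\ref{lemaconjug}.

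For the convexity of $t\mapsto\sum_{i=1}^m\theta_i(t)$, I would first record that, since $\|u(t)-1\|<\sqrt2$ and the multiset of eigenvalues of $u(t)$ varies continuously, all angles stay in $(-\pi/2,\pi/2)$ and the order statistics $\theta_1(t)\ge\dots\ge\theta_n(t)$, hence also $f_m:=\sum_{i=1}^m\theta_i$, are continuous on $[0,t_0]$. Because a continuous function on an interval is convex as soon as it is convex near each interior point, I would fix $t^*\in(0,t_0)$, enlarge $[0,t_0]$ to an open interval $(a,b)$ on which $\|u(t)-1\|<\sqrt2$ still holds, and apply Proposition~\ref{aproxdistint} with $r=\sqrt2$: this produces $y_k\to y$ and an open subinterval $I\ni t^*$ such that $u_k(t)=e^{itx}e^{iy_k}$ has distinct eigenvalues on $I$ and $\|u_k(t)-1\|<\sqrt2$ there. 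On a compact $[c,d]\subset I$ with $t^*$ in its interior, Theorem~\ref{convsumadistint} gives convexity of $t\mapsto\sum_{i=1}^m\theta_i^{(k)}(t)$, the $e^{i\theta_j^{(k)}(t)}$ being the eigenvalues of $u_k(t)$ listed in decreasing angle order. Letting $k\to\infty$, the uniform convergence $u_k\to u$ on $[c,d]$ forces pointwise convergence of the ordered angles, so $f_m$ is a pointwise limit of convex functions on $[c,d]$ and hence convex there; letting $t^*$ vary and using continuity at the endpoints gives convexity on all of $[0,t_0]$.

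For the concavity of $t\mapsto\sum_{i=m}^n\theta_i(t)$, I would invoke Remark~\ref{l1}: the total $\sum_{i=1}^n\theta_i(t)=t\,\mathrm{Tr}(x)+\mathrm{Tr}(y)$ is affine in $t$, which settles $m=1$; for $2\le m\le n$ one writes $\sum_{i=m}^n\theta_i(t)=t\,\mathrm{Tr}(x)+\mathrm{Tr}(y)-\sum_{i=1}^{m-1}\theta_i(t)$, an affine function minus a convex one (the first part applied with $m-1$ in place of $m$), hence concave. For the convexity of $t\mapsto\sum_{i=1}^m\sigma_i(t)$, I would pass to the doubled representation $\rho(u)=u\oplus\overline u$ of the ``Doubling the spectrum'' subsection. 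Writing $\overline a=JaJ$ and setting $X:=x\oplus(-\overline x)$ and $Y:=y\oplus(-\overline y)$ (both self-adjoint on $\C^n\oplus\C^n$), the identity $Je^{ia}J=e^{-i\overline a}$, valid for self-adjoint $a$, gives $\overline{u(t)}=e^{-it\overline x}e^{-i\overline y}$ and hence $e^{itX}e^{iY}=\bigl(e^{itx}e^{iy}\bigr)\oplus\bigl(e^{-it\overline x}e^{-i\overline y}\bigr)=u(t)\oplus\overline{u(t)}=\rho(u(t))$; moreover $\|\rho(u(t))-1_{2n}\|=\|u(t)-1\|<\sqrt2$ since $J$ is antiunitary. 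By Lemma~\ref{lemaconjug} the decreasingly ordered angles of $\rho(u(t))$ are $\sigma_1(t)\ge\dots\ge\sigma_n(t)\ge-\sigma_n(t)\ge\dots\ge-\sigma_1(t)$, so for $1\le m\le n$ the sum of its top $m$ angles is exactly $\sum_{i=1}^m\sigma_i(t)$, and applying the first part to the admissible segment $\rho(u(t))=e^{itX}e^{iY}$ completes the proof.

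Since the substantive work is already contained in the preceding subsections, the only points needing care in this assembly are (i) the continuity of the ordered angles together with the stability of convexity under the pointwise limit of the first part --- this is also where one must remember that Proposition~\ref{aproxdistint} yields only \emph{local} approximants, so the convexity is first obtained near each interior point and only then globalised --- and (ii) the sign bookkeeping showing $\rho(u(t))$ is again a segment $e^{itX}e^{iY}$ with $\|\rho(u(t))-1\|<\sqrt2$, which is what transfers the eigenvalue statement to the singular values. I expect (i) to be the only genuine, and still modest, obstacle.
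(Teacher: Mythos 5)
Your argument is correct, and for the first and third assertions it is essentially the paper's proof: localize, approximate by segments with distinct eigenvalues via Proposition~\ref{aproxdistint}, apply Theorem~\ref{convsumadistint}, pass to the pointwise limit, and then transfer to singular values through the doubled segment $\rho(u(t))=e^{it(x\oplus-\overline x)}e^{i(y\oplus-\overline y)}$ and Lemma~\ref{lemaconjug}. (The paper justifies the convergence of the ordered partial sums quantitatively, via the Hoffman--Lidskii--Wielandt inequality applied to $-i\log u_j(t)\to -i\log u(t)$, whereas you invoke continuity of the ordered angles; either suffices.) The one place you genuinely diverge is the concavity of $t\mapsto\sum_{i=m}^n\theta_i(t)$: the paper gets it by applying the first part to the conjugated segment $\overline{u(t)}=e^{-it\overline x}e^{-i\overline y}$, whose decreasingly ordered angles are $-\theta_n\ge\dots\ge-\theta_1$, while you subtract the convex sum $\sum_{i=1}^{m-1}\theta_i$ from the affine total $\sum_{i=1}^n\theta_i(t)=t\,\mathrm{Tr}(x)+\mathrm{Tr}(y)$ of Remark~\ref{l1}. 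Your route is arguably shorter, but note that Remark~\ref{l1} is derived under the hypotheses of Theorem~\ref{variaa} (distinct eigenvalues, so that the $\theta_k$ can be differentiated); to use it here you should observe that the identity holds in general, e.g.\ because $\sum_k\theta_k(t)=\mathrm{Tr}(-i\log u(t))$ is continuous, differs from $t\,\mathrm{Tr}(x)+\mathrm{Tr}(y)$ by an element of $2\pi\Z$ since $e^{i\sum_k\theta_k(t)}=\det u(t)$, and agrees with it at $t=0$ because $\|y\|\le\pi/2$ --- or simply by passing to the limit from the approximating segments. With that one-line patch both treatments of the concavity statement are equally valid.
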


\begin{proof}
Being a local property, it suffices to check the assertions around each $t_1\in  [0,t_0]$. By the previous theorem there is $y_j\to y$ such that for each $j$ the segment $u_j(t)=e^{itx}e^{iy_j}$ satisfies $\|u_j(t)-1\|< \sqrt{2}$ and has distinct eigenvalues in some interval $I$ around $t_1$. We denote with $\theta_1(t)\geq\theta_2(t)\geq\dots\geq\theta_n(t)$ the arguments of the eigenvalues of $u(t)$ and with $\theta_{1,j}(t)>\theta_{2,j}(t)>\dots>\theta_{n,j}(t)$ the arguments of the eigenvalues of $u_j(t)$. Then
$$\theta_k(t)=\lambda_k(-i\log(u(t)))$$
and
$$\theta_{k,j}(t)=\lambda_k(-i\log(u_j(t))).$$
Note that 
\begin{align*}
\left\vert\sum_{k=1}^m\theta_k(t)-\sum_{k=1}^m\theta_{k,j}(t)\right\vert&\leq\sum_{k=1}^m\left\vert\theta_k(t)-\theta_{k,j}(t)\right\vert\leq \sum_{k=1}^n\left\vert\theta_k(t)-\theta_{k,j}(t)\right\vert\\ 
&=\Vert(\theta_k(t)-\theta_{k,j}(t))_{k=1}^n\Vert_{l^1}\\
&=\Vert(\lambda_k(-i\log(u(t)))-\lambda_k(-i\log(u_j(t)))_{k=1}^n\Vert_{l^1}\\
&\leq \left\Vert -i(\log(u(t))-\log(u_j(t)))\right\Vert_1
\end{align*}
where the last inequality follows from the Hoffman-Lidskii-Wielandt inequality  \cite[Theorem 9.7]{bha}. For fixed $t\in I$ we have $e^{itx}e^{iy_j}\to e^{itx}e^{iy}$ as $j\to\infty$ since $y_j\to y$. Therefore, $\log(u_j(t))\to\log(u(t))$ and we conclude that for all $t\in I$  
$$
\sum_{k=1}^m\theta_{k,j}(t)\to\sum_{k=1}^m\theta_k(t).
$$
The functions $\sum_{k=1}^m\theta_{k,j}(t)$ are convex in $I$ for each $j$ by Theorem \ref{convsumadistint} and converge pointwise to the function $\sum_{k=1}^m\theta_k(t)$ there, hence $\sum_{k=1}^m\theta_k(t)$ is convex in $I$.

From the first assertion, by means of  Lemma \ref{lemaconjug}, we obtain the second assertion of this lemma if we take the curve $\overline{u(t)}=e^{-it\overline{x}}e^{-i\overline{y}}$. The last statement of this lemma follows from Lemma \ref{lemaconjug} if we take the curve 
$$\rho(u(t))=e^{it(x\oplus-\overline{x})}e^{i(y\oplus-\overline{y})}.$$

\end{proof}

\begin{rem}
The $\|u(t)-1\|< \sqrt{2}$ bound in the previous theorem is optimal, see \cite[Example 3.11]{miglioli} for an example in $n=2$.
\end{rem}

\bigskip

\section{Unitarily invariant norms in $U_n(\mathbb C)$}

For $n\in\N$ consider
$$\R^n_{+,\downarrow}=\{(\alpha_1,\dots,\alpha_n)\in\R^n:\alpha_1\geq \alpha_2\geq\dots\geq \alpha_n\geq 0\mbox{  and  }\alpha_1>0\}.$$
For an integer $1\leq k\leq n$ the Ky-Fan norms are defined by
$$\|x\|_{(k)}=\sum_{i=1}^k\sigma_i(x),
$$
where $\sigma_1(x)\ge \sigma_2(x)\ge \dots \ge \sigma_n(x)\ge 0$ is the string of singular values of $x$. For $\alpha\in \R^n_{+,\downarrow}$ we define 
$$\|x\|_{\alpha}=\sum_{i=1}^n\alpha_i\sigma_i(x).$$
Note that by setting $\alpha_{n+1}=0$ we have
$$\|x\|_{\alpha}=\sum_{i=1}^n(\alpha_i-\alpha_{i+1})\|x\|_{(i)}.$$

We now recall this fundamental result for unitarily invariant norms \cite[Theorem 2.1]{horn}:

\begin{thm}\label{normsupnui}
For every unitarily invariant norm $\|\cdot\|$ there is a subset $\A\subseteq \R^n_{+,\downarrow}$ such that
$$\|x\|=\max_{\alpha\in\A}\|x\|_\alpha.$$
\end{thm}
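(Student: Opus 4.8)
The plan is to realize an arbitrary unitarily invariant norm $\|\cdot\|$ as the supremum of the linear functionals that are dominated by it on the cone of singular-value strings, using standard convex-duality machinery together with the Ky-Fan dominance theorem. First I would pass from matrices to vectors: since $\|\cdot\|$ is unitarily invariant, by von Neumann's theorem it is of the form $\|x\| = g(\sigma(x))$ for a symmetric gauge function $g$ on $\R^n$, and in particular $\|x\|$ depends only on the string $\sigma(x) = (\sigma_1(x),\dots,\sigma_n(x)) \in \R^n_{+,\downarrow}$. So it suffices to show that the convex, positively homogeneous, monotone function $g$ restricted to $\R^n_{+,\downarrow}$ is the pointwise maximum over $\alpha$ in some subset $\A \subseteq \R^n_{+,\downarrow}$ of the linear functions $\xi \mapsto \langle \alpha, \xi\rangle = \sum_i \alpha_i \xi_i$.

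Next, the core step is a supporting-hyperplane argument. Fix $\xi^{0} \in \R^n_{+,\downarrow}$. Since $g$ is convex and finite, it has a subgradient $\beta$ at $\xi^{0}$, so $g(\xi) \ge g(\xi^{0}) + \langle \beta, \xi - \xi^{0}\rangle$ for all $\xi$; using positive homogeneity ($g(2\xi^0) = 2g(\xi^0)$ forces $\langle \beta, \xi^0\rangle = g(\xi^0)$), this becomes $g(\xi) \ge \langle \beta, \xi\rangle$ with equality at $\xi^{0}$. The issue is that $\beta$ need not lie in $\R^n_{+,\downarrow}$. To fix this I would use that $g$ is nondecreasing in each coordinate on the positive cone and symmetric under permutations together with Ky-Fan's majorization characterization: for $\xi \in \R^n_{+,\downarrow}$ one has $\langle \beta, \xi\rangle \le \langle \beta^{\downarrow}, \xi\rangle$ where $\beta^\downarrow$ is the decreasing rearrangement of $|\beta|$ (or just of $\beta$, after discarding the negative part, which only helps because $\xi \ge 0$), and the replaced string $\beta^\downarrow$ still satisfies $g(\xi^0) = \langle \beta^\downarrow, \xi^0 \rangle$ when $\xi^0$ itself is already decreasing — this last equality is exactly the rearrangement-inequality equality case, valid since $\xi^0 \in \R^n_{+,\downarrow}$. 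After a further truncation to guarantee $\beta^\downarrow$ has nonnegative entries with $\beta^\downarrow_1 > 0$ (the degenerate case $g \equiv 0$ is excluded since norms are nonzero), we obtain $\alpha := \beta^\downarrow \in \R^n_{+,\downarrow}$ with $\|x\|_\alpha = \langle \alpha, \sigma(x)\rangle \le g(\sigma(x)) = \|x\|$ for every $x$, and $\|x\|_\alpha = \|x\|$ whenever $\sigma(x) = \xi^0$.

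Finally I would assemble the family: let $\A$ be the set of all such $\alpha$ obtained as $\xi^{0}$ ranges over $\R^n_{+,\downarrow}$. By construction each $\|\cdot\|_\alpha \le \|\cdot\|$, so $\max_{\alpha \in \A}\|x\|_\alpha \le \|x\|$; conversely, given $x$, choosing the $\alpha$ associated to $\xi^0 = \sigma(x)$ gives $\|x\|_\alpha = \|x\|$, so the max is attained and equals $\|x\|$. I expect the main obstacle to be the bookkeeping in the middle step — correctly arranging that the subgradient can be replaced by a vector in $\R^n_{+,\downarrow}$ without destroying either the global inequality $\|\cdot\|_\alpha \le \|\cdot\|$ or the equality at the chosen point; this is where the interplay between convex duality, the symmetry and monotonicity of the gauge function, and the rearrangement (Ky-Fan) inequality all has to be invoked in the right order. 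Everything else is routine, and indeed this is precisely Theorem 2.1 of \cite{horn}, whose proof I would follow.
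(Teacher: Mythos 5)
Your proposal is correct, but note that the paper itself contains no proof of this statement: it is recalled verbatim from \cite[Theorem 2.1]{horn}, so the only meaningful comparison is with Horn and Mathias's argument. Their proof is the same convex-duality idea carried out at the matrix level: write $\|x\|=\max\{|\tr(xy^*)|: y \text{ in the unit ball of the dual norm}\}$ and apply von Neumann's trace inequality $|\tr(xy^*)|\le\sum_i\sigma_i(x)\sigma_i(y)$, which is attained when $y$ is aligned with $x$; since the dual norm is again unitarily invariant, the aligned $y$ stays in the dual ball, and one may take $\A=\{\sigma(y):\ y \text{ in the dual unit ball},\ y\ne 0\}$. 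You work instead at the vector level, via the symmetric gauge function $g$ and a subgradient $\beta$ at $\xi^0$, and then repair $\beta$; this is equally valid, but two steps in your middle paragraph should be made explicit rather than attributed loosely to ``Ky-Fan'' or to an ``equality case''. First, the preserved global bound: if $g(\xi)\ge\langle\beta,\xi\rangle$ for all $\xi$, then for $\xi\in\R^n_{+,\downarrow}$ pick a permutation and signs so that $\langle|\beta|^\downarrow,\xi\rangle=\langle\beta,\eta\rangle$ with $\eta$ obtained from $\xi$ by permuting entries and flipping signs; since $g$ is invariant under permutations and entrywise sign changes, $g(\eta)=g(\xi)$, hence $\langle|\beta|^\downarrow,\xi\rangle\le g(\xi)$. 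Second, equality at $\xi^0$ comes from a sandwich, not from a rearrangement equality case: $g(\xi^0)=\langle\beta,\xi^0\rangle\le\langle|\beta|,\xi^0\rangle\le\langle|\beta|^\downarrow,\xi^0\rangle\le g(\xi^0)$, using $\xi^0\ge0$, the rearrangement inequality for the decreasing $\xi^0$, and the previous bound; also $\alpha_1>0$ because $\langle\alpha,\xi^0\rangle=g(\xi^0)>0$ for $\xi^0\ne0$. With those two lines inserted, your assembly of $\A$ and the attainment of the maximum at the $\alpha$ built from $\xi^0=\sigma(x)$ go through exactly as you describe.
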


\begin{thm}\label{convdistancNUI}
Let $u(t)=e^{itx}e^{iy}=e^{ix(t)}\in U_n(\mathbb C)$ be a segment defined for $t\in [t_1,t_2]$. If $\|\cdot\|$ is an unitarily invariant norm on $\u(n)$ such that for $t\in [t_1,t_2]$ we have $\|x(t)\|_{\infty}<\frac{\pi}{2}$, then $t\mapsto\|x(t)\|$ is convex in $[t_1,t_2]$. 
\end{thm}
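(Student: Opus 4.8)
The plan is to reduce the statement, via the structure theorem for unitarily invariant norms (Theorem \ref{normsupnui}) together with the elementary fact that a pointwise supremum of convex functions is convex, to the convexity of the Ky-Fan norms of $x(t)$, and then to invoke the convexity of partial sums of singular values established in Theorem \ref{convsumdistgeneral}.

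First I would observe that the hypothesis $\|x(t)\|_{\infty}<\frac{\pi}{2}$ is equivalent to $\|u(t)-1\|_{\infty}<\sqrt{2}$: the eigenvalues of $u(t)$ are $e^{i\theta}$ with $\theta$ running over the eigenvalues of $x(t)$, and $|e^{i\theta}-1|^2=2-2\cos\theta<2$ exactly when $|\theta|<\frac{\pi}{2}$. In particular $\|u(t)-1\|_{\infty}<2$, so $x(t)=-i\log(u(t))$ is the principal branch of the logarithm, and the singular values $\sigma_i(x(t))$ of $x(t)$ are precisely the numbers $\sigma_i(t)$ appearing in Theorem \ref{convsumdistgeneral}. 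To place ourselves in the framework of that theorem (whose segment starts at parameter $0$) I would reparametrize: for $s\in[0,t_2-t_1]$ write $u(t_1+s)=e^{isx}e^{it_1x}e^{iy}=e^{isx}e^{ix(t_1)}$, which is again a segment of the required form with $\|u(t_1+s)-1\|_{\infty}<\sqrt{2}$. Theorem \ref{convsumdistgeneral} then gives that $s\mapsto\sum_{i=1}^m\sigma_i(x(t_1+s))$ is convex for every $1\le m\le n$; equivalently, $t\mapsto\|x(t)\|_{(m)}=\sum_{i=1}^m\sigma_i(x(t))$ is convex on $[t_1,t_2]$.

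Next, for a fixed $\alpha\in\R^n_{+,\downarrow}$ I would use the identity $\|x\|_{\alpha}=\sum_{i=1}^n(\alpha_i-\alpha_{i+1})\|x\|_{(i)}$ with $\alpha_{n+1}=0$ and all coefficients $\alpha_i-\alpha_{i+1}\ge 0$, so that $t\mapsto\|x(t)\|_{\alpha}$ is a nonnegative linear combination of the convex functions $t\mapsto\|x(t)\|_{(i)}$, hence convex. Finally, by Theorem \ref{normsupnui} there is $\A\subseteq\R^n_{+,\downarrow}$ with $\|x(t)\|=\max_{\alpha\in\A}\|x(t)\|_{\alpha}$, and a supremum of convex functions is convex; therefore $t\mapsto\|x(t)\|$ is convex on $[t_1,t_2]$.

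The argument is essentially a bookkeeping reduction, and I do not expect a serious obstacle: the only points needing care are the identification of $x(t)$ with the principal logarithm $-i\log(u(t))$ (so that the $\sigma_i(x(t))$ are genuinely the $\sigma_i(t)$ of Theorem \ref{convsumdistgeneral}), and the harmless affine reparametrization moving the base point of the segment to $0$; everything else is the standard stability of convexity under nonnegative linear combinations and suprema.
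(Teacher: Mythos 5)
Your proposal is correct and follows essentially the same route as the paper's own proof: reduce via Theorem \ref{normsupnui} and the stability of convexity under nonnegative combinations and suprema to the Ky-Fan norms, and then apply Theorem \ref{convsumdistgeneral} to the partial sums of singular values of $x(t)$. The extra care you take (the equivalence $\|x(t)\|_{\infty}<\frac{\pi}{2}\Leftrightarrow\|u(t)-1\|_{\infty}<\sqrt{2}$ and the reparametrization to base point $0$) is exactly the bookkeeping the paper leaves implicit when it says the hypotheses of Theorem \ref{convsumdistgeneral} are satisfied.
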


\begin{proof}
By Theorem \ref{normsupnui} we have 
$$\|x\|=\max_{\alpha\in\A}\|x\|_\alpha$$
for a subset $\A\subseteq \R^n_{+,\downarrow}$. If we show that $\|x(t)\|_\alpha$ is convex for any $\alpha\in\R^n_{+,\downarrow}$ then the conclusion follows, since $\|x(t)\|$ is a supremum of convex functions so it is convex. 

Let $\alpha\in\R^n_{+,\downarrow}$ and let $\sigma_1(t)\geq\sigma_2(t)\geq\dots\geq\sigma_n(t)$ be the eigenvalues of $x(t)$. The conditions of Theorem \ref{convsumdistgeneral} are satisfied and by this theorem 
$$\sum_{i=1}^m\sigma_i(t)$$
are convex for all $m$. Note that by setting $\alpha_{n+1}=0$ we have that
$$\|x(t)\|_{\alpha}=\sum_{i=1}^n(\alpha_i-\alpha_{i+1})\|x(t)\|_{(i)}=\sum_{i=1}^n(\alpha_i-\alpha_{i+1})\sum_{k=1}^i\sigma_k(t)$$
is a sum of convex functions, hence it is convex. This concludes the proof.  
\end{proof}
 
\begin{ex}[$p$-norms] For $p=[1,+\infty]$, we can consider the $p$-Schatten norms 
$$
\|x\|_p=(\tr|x|^p)^{1/p}=\left(\sum_{k=1}^n |\lambda_k(x)|^p\right)^{1/p}.
$$
In \cite{pschatten} we showed that the map $t\mapsto \|x(t)\|_p$ where $e^{ix(t)}=e^{itx}e^{iy}$ as before is convex for $p=2k$, $k\in\mathbb N$, and certain radius depending on $p$. With the previous theorem, we now have the result extended for all $p$, with a uniform, optimal radius for all $p$.
\end{ex}

\section{$\Ad$-invariant norms in compact semisimple groups}

In this section we recall some facts on $\Ad$-invariant norms on the Lie algebra $\k$ of a compact semisimple group $K$, and we prove a series of results that will enable us to extend the convexity results to $\Ad$-invariant norms in $\SU(n)$.

\begin{defn}[$\Ad$-invariant Finsler norms]
A \textit{Finsler norm} in $\mathfrak k$ is a map  $\|\cdot\|:\mathfrak k\to\mathbb R_{\ge 0}$ which is subadditive, non-degenerate and positively homogeneous: $\|\lambda v\|=\lambda\|v\|$ for any $\lambda\ge 0$ and any $v\in\mathfrak k$. An $\Ad$\textit{-invariant Finsler norm} is a Finsler norm invariant for the adjoint action of $K$: $\|\Ad_uv\|=\|v\|$ for any $v\in \mathfrak k$ and any $u\in K$.
\end{defn}

\begin{defn}
Let $\h$ be the Lie algebra of a maximal torus in a compact semisiple Lie group $K$, and let $\h^+$ be a positive Weyl chamber given by a choice of positive roots in the Cartan algebra $\h$. Let $\langle\cdot,\cdot\rangle$ denote the $\Ad$-invariant Killing form of $\k$, and for $x,\mu\in\k$ we define
$$\|x\|_\mu=\max_{k\in K} \langle x,\Ad_k\mu\rangle.$$
This is an $\Ad$-invariant Finsler norm in $\mathfrak{k}$, and it is a norm if and only if $\mu$ is symmetric, i.e. $\mu=-\mu$. See Section 2 in \cite{larmi} for all the details.
\end{defn}

\begin{ex}\label{ejsun}
When $K=\SU_n(\mathbb C)$, these where called \textit{orbit norms} in \cite{larey}. Let $\mu\in \k=\su(n)$ have eigenvalues $i\mu_1,\dots,i\mu_n$ with $\mu_1\geq\mu_2\geq \dots\geq\mu_n$ and let $x\in \su(n)$ have eigenvalues $ix_1,\dots,ix_n$ with $x_1\geq x_2\geq \dots\geq x_n$. Then it is not hard to check that
$$\|x\|_\mu=x_1\mu_1+\dots+x_n\mu_n.$$
See Lemma 2.12 in \cite{larey} for a proof and further details.
\end{ex}

\begin{thm}[Kostant]\label{kostant}
If $K$ is a compact Lie group and $T$ is a maximal torus in $K$ and $p:\k\to\h$ is the projection onto $\h$, then 
$$p(K.x )=\conv(\W .x),$$
where $\W .x$ is the Weyl group orbit of $x\in\k$.
\end{thm}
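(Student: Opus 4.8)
The statement is Kostant's convexity theorem, so I would present the standard proof via the structure theory of compact Lie groups, adapting the classical argument of Kostant (or the simpler gradient-flow proof of Atiyah--Guillemin--Sternberg type, specialized to this group setting). The plan is to prove the two inclusions $p(K.x)\subseteq \conv(\W.x)$ and $\conv(\W.x)\subseteq p(K.x)$ separately.

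For the inclusion $p(K.x)\subseteq \conv(\W.x)$, I would argue as follows. The set $p(K.x)$ is compact and convex (it is the image under a linear projection of the coadjoint-type orbit $K.x$, which after identifying $\k^*$ with $\k$ via the Killing form becomes the projection to $\h$ of a coadjoint orbit; the Atiyah--Guillemin--Sternberg theorem, or an elementary argument using the fact that $p$ is the momentum map for the torus action, gives convexity). It therefore suffices to show the extreme points of $p(K.x)$ lie in $\W.x$. An extreme point is attained at a critical point of a linear functional $\langle\cdot,\xi\rangle$ on $K.x$ for generic $\xi\in\h$; the critical points of $y\mapsto \langle \Ad_k x,\xi\rangle$ are the points $\Ad_k x$ with $[\Ad_k x,\xi]=0$, which for generic $\xi$ forces $\Ad_k x\in\h$, hence (by conjugacy of regular elements under the Weyl group) $\Ad_k x\in \W.x$. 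Each such point projects to itself, so the extreme points of $p(K.x)$ lie in $\W.x$, giving the first inclusion.

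For the reverse inclusion $\conv(\W.x)\subseteq p(K.x)$: since $p(K.x)$ is convex and $\W.x\subseteq p(K.x)$ trivially (each $w.x$ is $\Ad_{n_w}x$ for a representative $n_w\in N(T)$, and $p$ fixes $\h$ pointwise), taking convex hulls gives the inclusion immediately. So in fact the content is entirely in the first inclusion plus the convexity of $p(K.x)$.

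The main obstacle is establishing that $p(K.x)$ is convex — this is the heart of Kostant's theorem and is not formal. I would either (i) cite it as the linear-convexity/momentum-map statement and invoke Atiyah--Guillemin--Sternberg, noting $p$ is the momentum map for the maximal-torus action on the (co)adjoint orbit $K.x$ with its Kirillov--Kostant--Souriau symplectic form, or (ii) give Kostant's original inductive argument over the rank using $SU(2)$-reductions along simple roots. For the purposes of this paper, where Kostant's theorem is merely quoted as a tool (Theorem \ref{kostant}) to be used in the sequel, I expect the cleanest path is to cite the momentum-map convexity result and then supply only the short extreme-point computation above; the delicate part about which a reader should be warned is precisely that one cannot avoid some genuine symplectic or Lie-theoretic input for convexity — the extreme-point analysis alone does not suffice.
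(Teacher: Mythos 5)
The paper does not actually prove this statement: Theorem \ref{kostant} is quoted as Kostant's classical convexity theorem and is only used as a black box (via Corollary \ref{projball} and Lemma \ref{hofercartan}), so there is no internal proof to compare against. Your sketch is the standard route and is essentially correct: the inclusion $\conv(\W.x)\subseteq p(K.x)$ is indeed immediate once $p(K.x)$ is known to be convex, and the convexity itself is genuinely non-formal, so delegating it to Kostant's original rank-induction or to the Atiyah--Guillemin--Sternberg momentum-map theorem (with $p$ the momentum map of the $T$-action on the orbit $K.x$ with its canonical symplectic form) is legitimate; note that if you invoke AGS you get the full statement in one stroke, since the $T$-fixed points of $K.x$ are exactly $K.x\cap\h=\W.x$, so the momentum image is $\conv(\W.x)$ with no further work.

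Two refinements you should make to the first inclusion. First, your extreme-point argument has a gap as stated: not every extreme point of $p(K.x)$ is exposed, and an exposed point may only be exposed by a singular $\xi\in\h$, in which case criticality of $k\mapsto\langle \Ad_kx,\xi\rangle$ only gives $[\Ad_kx,\xi]=0$, i.e.\ $\Ad_kx$ lies in the centralizer $\k^{\xi}$, not in $\h$. The clean fix, which also removes any reliance on convexity of $p(K.x)$ for this direction, is a support-function argument: for every $\xi\in\h$ the maximum of $\langle\cdot,\xi\rangle$ over $K.x$ is attained at a critical point $y\in\k^{\xi}$, and conjugating $y$ into $\h$ by an element of the compact centralizer $K^{\xi}\supseteq T$ does not change $\langle\cdot,\xi\rangle$; hence the support function of $p(K.x)$ is dominated by that of $\conv(\W.x)$ for all $\xi\in\h$, which already yields $p(K.x)\subseteq\conv(\W.x)$. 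Second, your appeal to ``conjugacy of regular elements under the Weyl group'' should be replaced by the standard fact, valid without regularity of $x$, that two elements of $\h$ which are $K$-conjugate are $\W$-conjugate, i.e.\ $K.x\cap\h=\W.x$; this is what identifies the critical values with points of $\W.x$.
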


We recall the following corollary of Kostant's convexity theorem.

\begin{cor}\label{projball}
If $B\subseteq\k$ is an $\Ad$-invariant convex set then and $p:\k\to\h$ is the orthogonal projection onto $\h$ then 
$$p(B)=B\cap\h.$$
\end{cor}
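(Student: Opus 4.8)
The plan is to prove the two inclusions separately. The inclusion $B\cap\h\subseteq p(B)$ is immediate: if $y\in B\cap\h$ then $p(y)=y$, since $p$ restricts to the identity on $\h$, so $y\in p(B)$; this uses nothing about $B$.

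For the reverse inclusion $p(B)\subseteq B\cap\h$, fix $x\in B$ and I would show $p(x)\in B\cap\h$. Every adjoint orbit in $\k$ meets $\h$, so pick $x_0\in (K\cdot x)\cap\h$; the set $(K\cdot x)\cap\h$ is precisely the Weyl orbit $\W\cdot x_0$. By Kostant's theorem (Theorem \ref{kostant}), $p(K\cdot x)=\conv(\W\cdot x_0)$, so in particular $p(x)\in\conv(\W\cdot x_0)$. The key point is that $\W\cdot x_0\subseteq K\cdot x$: writing the Weyl group as $N_K(T)/T$, each element of $\W\cdot x_0$ has the form $\Ad_n x_0$ for some $n$ in the normalizer of $T$, and since $x_0=\Ad_k x$ for some $k\in K$, this equals $\Ad_{nk}x\in K\cdot x$. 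Since $B$ is $\Ad$-invariant, $\W\cdot x_0\subseteq B$; since $B$ is convex and $\W\cdot x_0$ is finite, $\conv(\W\cdot x_0)\subseteq B$ (no closedness of $B$ is needed, as $\conv(\W\cdot x_0)$ is a genuine polytope). Hence $p(x)\in\conv(\W\cdot x_0)\subseteq B$, and $p(x)\in\h$ trivially, so $p(x)\in B\cap\h$.

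The argument is essentially formal once Kostant's theorem is granted; the only step worth stating with care is that points of the Weyl orbit $\W\cdot x_0$ are adjoint-orbit points of $x$, so that $\Ad$-invariance of $B$ lets one place the whole finite set $\W\cdot x_0$ inside $B$ before forming its convex hull and comparing with $p(K\cdot x)$ via Theorem \ref{kostant}. I do not expect any real obstacle beyond that bookkeeping.
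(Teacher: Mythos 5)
Your argument is correct and is essentially the paper's own proof: conjugate a point of $B$ into $\h$, use $\Ad$-invariance to keep it in $B$, apply Kostant's theorem to place $p(x)$ in $\conv(\W\cdot x_0)$, and use convexity plus the fact that the Weyl orbit sits inside the adjoint orbit (hence inside $B$) to conclude. Your write-up merely makes explicit the steps ($\W=N_K(T)/T$ and $\conv(\W\cdot x_0)\subseteq B$) that the paper leaves implicit.
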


\begin{proof}
Let $b\in B$ and let $k\in K$ such that $\Ad_k(b)\in \h$. By invariance of $B$ we know that $\Ad_k(b)\in B$. Hence by Kostant's convexity theorem
$$p(b)\in p(K.b)=\conv(\W.\Ad_k(b))\subseteq B\cap\h.$$
This proves that $p(B)\subseteq B\cap\h$. The other inclusion is trivial.  
\end{proof}

We recall from \cite[Lemma 2.9]{mozz} the following generalization of the rearrangement inequality 
\begin{equation}\label{reaineq}
\sum_{i=1}^nx_{\rho(i)}y_i\leq\sum_{i=1}^nx_iy_i
\end{equation}
for $x_1\leq\dots\leq x_n$, $y_1\leq\dots\leq y_n$ and all permutations $\rho$. From this one can deduce the following:

\begin{lem}\label{rearrengementineq}
Suppose $x,y\in \h$, then
$$\sup_{w\in\W}\langle x,w.y\rangle=\langle x,y\rangle$$
if and only if $x$ and $y$ belong to the same Weyl chamber. 
\end{lem}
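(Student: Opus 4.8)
The plan is to deduce both implications from a single refinement of the generalized rearrangement inequality (\ref{reaineq}): a statement about how the value of $\langle x,\cdot\rangle$ changes as one reduces an element of a Weyl orbit to the dominant chamber by simple reflections, keeping track of when no change occurs.

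First I would record two reductions. Since the Killing form is $\W$-invariant, for any $w_0\in\W$ one has $\langle x,w.y\rangle=\langle w_0.x,(w_0ww_0^{-1}).(w_0.y)\rangle$, and as $w$ runs over $\W$ so does $w_0ww_0^{-1}$; hence $\sup_{w\in\W}\langle x,w.y\rangle$ depends only on the pair of orbits $(\W.x,\W.y)$, and (using also $\langle x,y\rangle=\langle w_0.x,w_0.y\rangle$) both the hypothesis and the conclusion of the lemma are unchanged if we replace $(x,y)$ by $(w_0.x,w_0.y)$. So we may assume $x$ lies in the closed positive Weyl chamber $\overline{\h^+}$, i.e. $x$ is dominant; and, when proving the ``if'' direction, we may move the common chamber onto $\h^+$ and thus assume $x$ and $y$ are both dominant.

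The technical core I would isolate is: \emph{if $x\in\overline{\h^+}$ is dominant and $c\in\h$ is arbitrary, with $c^{+}$ the dominant representative of the orbit $\W.c$, then $\langle x,c\rangle\le\langle x,c^{+}\rangle$, with equality only if $x$ and $c$ lie in a common closed Weyl chamber.} To prove this I would use the standard reduction $c=c_0$, $c_i=s_{\alpha_i}c_{i-1}$, where at each step $\alpha_i$ is a simple root with $\langle c_{i-1},\alpha_i\rangle<0$ (available whenever $c_{i-1}$ is not dominant, and since $s_{\alpha_i}$ negates $\alpha_i$ and permutes the remaining positive roots, $\#\{\beta>0:\langle c_{i-1},\beta\rangle<0\}$ drops by one, so the process terminates at $c_N=c^{+}$, with $v.c=c^{+}$ for $v=s_{\alpha_N}\cdots s_{\alpha_1}$). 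Writing roots as elements of $\h$ via the form, $s_\alpha(b)=b-2\langle b,\alpha\rangle\langle\alpha,\alpha\rangle^{-1}\alpha$, so
$$
\langle x,c_i\rangle-\langle x,c_{i-1}\rangle=-\,\frac{2\langle c_{i-1},\alpha_i\rangle}{\langle\alpha_i,\alpha_i\rangle}\,\langle x,\alpha_i\rangle\ \ge\ 0,
$$
because $\langle c_{i-1},\alpha_i\rangle<0$ while $\langle x,\alpha_i\rangle\ge0$ by dominance of $x$ against the positive root $\alpha_i$; equality holds exactly when $\langle x,\alpha_i\rangle=0$, i.e. when $s_{\alpha_i}$ fixes $x$. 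Summing gives $\langle x,c\rangle\le\langle x,c^{+}\rangle$; and if equality holds then every $s_{\alpha_i}$ fixes $x$, so $v.x=x$, whence $x=v^{-1}.x\in v^{-1}.\overline{\h^+}$ and $c=v^{-1}.c^{+}\in v^{-1}.\overline{\h^+}$, so $x$ and $c$ lie in the closed chamber $v^{-1}.\overline{\h^+}$. Applied with $c=w.y$ for varying $w$, this also shows $\sup_{w\in\W}\langle x,w.y\rangle=\langle x,y^{+}\rangle$ when $x$ is dominant. (This recovers (\ref{reaineq}) and supplies its equality statement.)

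Granting the core statement, the lemma follows at once. For ``if'': after the reduction $x,y$ are both dominant, so $y=y^{+}$ and $\sup_w\langle x,w.y\rangle=\langle x,y^{+}\rangle=\langle x,y\rangle$. For ``only if'': with $x$ dominant the hypothesis reads $\langle x,y\rangle=\sup_w\langle x,w.y\rangle=\langle x,y^{+}\rangle$, so the equality case of the core statement (taken with $c=y$) places $x$ and $y$ in a common closed Weyl chamber. The step I expect to be the main obstacle is exactly this equality analysis, and in particular getting the bookkeeping right when $x$ lies on a wall (so $x$ has nontrivial stabilizer): there ``same Weyl chamber'' must be read as ``same closed chamber'', $y$ need not be dominant even though $x$ is, and one must exhibit the explicit common chamber $v^{-1}.\overline{\h^+}$ rather than concluding $y$ is dominant. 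If one prefers, the reduction‑path argument can be replaced by quoting a full equality characterization of the generalized rearrangement inequality.
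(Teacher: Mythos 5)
Your proof is correct, and it is more self-contained than what the paper does: the paper gives no argument at all for this lemma, simply recalling the generalized (Coxeter/Weyl group) rearrangement inequality from \cite[Lemma 2.9]{mozz} and asserting that the lemma "can be deduced" from it, whereas you prove both the inequality $\langle x,c\rangle\le\langle x,c^{+}\rangle$ (for $x$ dominant) and, crucially, its equality case from scratch via the standard reduction of $c$ to the dominant chamber by simple reflections, with the telescoping computation $\langle x,c_i\rangle-\langle x,c_{i-1}\rangle=-2\langle c_{i-1},\alpha_i\rangle\langle\alpha_i,\alpha_i\rangle^{-1}\langle x,\alpha_i\rangle\ge 0$. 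This is a genuine difference in route: the "only if" half of the lemma is really an equality characterization, which is not literally contained in the displayed inequality (\ref{reaineq}), so your explicit analysis (each step degenerate forces $s_{\alpha_i}x=x$, hence $v.x=x$ and both $x$ and $c=v^{-1}.c^{+}$ lie in the closed chamber $v^{-1}.\overline{\h^+}$) supplies exactly the content the paper outsources to the reference. You are also right to read "same Weyl chamber" as "same closed Weyl chamber" and to handle the wall case (nontrivial stabilizer of $x$) explicitly; with the open-chamber reading the "only if" direction would be vacuously problematic for $x$ on a wall, and the closed reading is the one consistent with the paper's later use of the lemma for $x,\mu\in\h^+$ in Lemma \ref{hofercartan} and Theorem \ref{normsuphofer}. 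The trade-off is simply length versus self-containedness: the paper's citation is shorter, your argument makes the equality bookkeeping verifiable on the spot.
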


Let $B_\mu=\{x\in\k:\|x\|_\mu\leq 1\}$ be the unit ball of $\k$ with the norm $\|\cdot\|_\mu$. 

\begin{lem}\label{hofercartan}
For $\mu\in \h$ we have 
$$B_\mu\cap\h=\{x\in\h:\max_{w\in\W}\langle x,w.\mu\rangle\leq 1\}.$$
\end{lem}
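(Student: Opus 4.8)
The plan is to reduce the statement to Kostant's convexity theorem (Theorem \ref{kostant}) together with the elementary fact that a linear functional on a convex polytope attains its maximum at a vertex. First I would unwind the definition of $B_\mu$: since $\|x\|_\mu=\max_{k\in K}\langle x,\Ad_k\mu\rangle$, intersecting with $\h$ gives
$$B_\mu\cap\h=\{x\in\h:\max_{k\in K}\langle x,\Ad_k\mu\rangle\leq 1\},$$
so it suffices to prove that for every $x\in\h$ one has
$$\max_{k\in K}\langle x,\Ad_k\mu\rangle=\max_{w\in\W}\langle x,w.\mu\rangle.$$

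The key observation is that the splitting $\k=\h\oplus\h^{\perp}$ is orthogonal with respect to $\langle\cdot,\cdot\rangle$, and $p:\k\to\h$ is exactly the orthogonal projection along $\h^{\perp}=\ker p$; hence for $x\in\h$ and any $v\in\k$ we have $\langle x,v\rangle=\langle x,p(v)\rangle$, because $v-p(v)\in\h^{\perp}$ is orthogonal to $x$. Applying this with $v=\Ad_k\mu$ and letting $k$ range over $K$,
$$\max_{k\in K}\langle x,\Ad_k\mu\rangle=\max_{k\in K}\langle x,p(\Ad_k\mu)\rangle=\max_{v\in p(K.\mu)}\langle x,v\rangle.$$
By Kostant's theorem, $p(K.\mu)=\conv(\W.\mu)$, the convex hull of the finite set $\W.\mu$ (here is where we use $\mu\in\h$). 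A linear functional on such a polytope attains its maximum at an extreme point, and the extreme points of $\conv(\W.\mu)$ lie among the $w.\mu$, $w\in\W$; therefore $\max_{v\in\conv(\W.\mu)}\langle x,v\rangle=\max_{w\in\W}\langle x,w.\mu\rangle$, which finishes the proof.

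There is essentially no hard obstacle here: the substance is entirely contained in Kostant's theorem, which is quoted. The only points needing a line of care are (i) the identity $\langle x,v\rangle=\langle x,p(v)\rangle$ for $x\in\h$, which is just orthogonality of the splitting $\k=\h\oplus\h^{\perp}$; (ii) that the maximum over the compact set $K$ of the continuous map $k\mapsto\langle x,p(\Ad_k\mu)\rangle$ equals the maximum of $\langle x,\cdot\rangle$ over its image, which is immediate since that image is precisely $p(K.\mu)$; and (iii) the reduction from a maximum over a polytope to a maximum over its vertices, a standard fact. I would present these three points in a single short paragraph each verified in one line.
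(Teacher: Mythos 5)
Your proof is correct and follows essentially the same route as the paper: reduce to the identity $\max_{k\in K}\langle x,\Ad_k\mu\rangle=\max_{w\in\W}\langle x,w.\mu\rangle$ via orthogonality of the projection $p$, Kostant's theorem $p(K.\mu)=\conv(\W.\mu)$, and the fact that a linear functional on a polytope is maximized at a vertex. You merely spell out in a bit more detail the steps the paper compresses into one chain of equalities.
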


\begin{proof}
If $x,\mu\in\h$ then 
\begin{align*}
\|x\|_\mu&=\max_{k\in K} \langle x,\Ad_k\mu\rangle=\max_{k\in K} \langle x,p(\Ad_k\mu)\rangle\\
&=\max \langle x,\conv(\W.\mu)\rangle=\max_{w\in\W}\langle x,w.\mu\rangle
\end{align*}
where the third equality is given by Theorem \ref{kostant}. Hence the lemma follows.
\end{proof}

For a set $C\subseteq \k$ we denote with $C^\circ$ its polar dual and with $\bd C$ its boundary.

\begin{thm}\label{normsuphofer}
If $\|\cdot\|$ is an $\Ad$-invariant Finsler norm on $\k$ and $B\subset\k$ is its closed unit ball, then
$$\|x\|=\sup_{\mu\in \bd B^\circ \cap\h^+}\|x\|_\mu.$$
\end{thm}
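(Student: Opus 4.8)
The plan is to prove the two inequalities $\|x\| \ge \sup_{\mu\in\bd B^\circ\cap\h^+}\|x\|_\mu$ and $\|x\| \le \sup_{\mu\in\bd B^\circ\cap\h^+}\|x\|_\mu$ separately, using convex duality between $B$ and its polar dual $B^\circ$, together with Kostant's theorem (Theorem \ref{kostant}) and its Corollary \ref{projball} to reduce everything to the Cartan subalgebra $\h$. The key identity I will lean on is that for a closed convex balanced (or, in the Finsler case, merely $0\in\interior B$) set $B$, the gauge functional $\|x\| = \sup_{\xi\in B^\circ}\langle x,\xi\rangle$, where $B^\circ = \{\xi\in\k : \langle x,\xi\rangle\le 1 \ \forall x\in B\}$ is taken with respect to the Killing form, and the $\Ad$-invariance of $\|\cdot\|$ forces $B^\circ$ to be $\Ad$-invariant as well. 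Note also that $\|x\|_\mu = \max_{k\in K}\langle x,\Ad_k\mu\rangle$ is, by definition, the support function of the orbit $K.\mu$, so $\|x\|_\mu \le \|x\|$ exactly when $K.\mu\subseteq B^\circ$, i.e. when $\mu\in B^\circ$; and since $B^\circ$ is $\Ad$-invariant, $\mu\in B^\circ$ can be tested on $\h$.

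First I would establish the easy inequality. For each $\mu\in\bd B^\circ\cap\h^+\subseteq B^\circ$, the whole orbit $K.\mu$ lies in $B^\circ$ by $\Ad$-invariance, hence $\langle x,\Ad_k\mu\rangle\le \|x\|$ for every $k\in K$ by definition of the polar/gauge duality, so $\|x\|_\mu = \max_{k}\langle x,\Ad_k\mu\rangle \le \|x\|$. Taking the supremum over $\mu$ gives $\sup_\mu \|x\|_\mu \le \|x\|$.

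For the reverse inequality, fix $x\in\k$. By $\Ad$-invariance of $\|\cdot\|$ we may assume $x\in\h^+$ (replace $x$ by $\Ad_k x\in\h^+$ for suitable $k$, which changes neither side). Since $\|x\| = \sup_{\xi\in B^\circ}\langle x,\xi\rangle$ and $B^\circ$ is compact (as $0\in\interior B$), the supremum is attained at some $\xi_0\in\bd B^\circ$. Now apply Corollary \ref{projball} to the $\Ad$-invariant closed convex set $B^\circ$: the orthogonal projection $p:\k\to\h$ satisfies $p(B^\circ) = B^\circ\cap\h$, and since $x\in\h$ we have $\langle x,\xi_0\rangle = \langle x, p(\xi_0)\rangle$ with $p(\xi_0)\in B^\circ\cap\h$. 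Thus $\|x\| = \langle x, \eta\rangle$ for some $\eta\in B^\circ\cap\h$. By the Weyl group action on $\h$ we may further move $\eta$ into the closed chamber: there is $w\in\W$ with $w.\eta\in\h^+$, and $w.\eta\in B^\circ$ by $\Ad$-invariance (the Weyl group acts by elements of $\Ad K$ restricted to $\h$), so $w.\eta\in B^\circ\cap\h^+$. By Lemma \ref{rearrengementineq}, since $x$ and $w.\eta$ lie in the same chamber $\h^+$, we have $\langle x, w.\eta\rangle = \sup_{v\in\W}\langle x, v.(w.\eta)\rangle \ge \langle x,\eta\rangle = \|x\|$; combined with the easy inequality this forces $\langle x, w.\eta\rangle = \|x\|$, i.e. we may as well take $\mu := w.\eta\in B^\circ\cap\h^+$. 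Finally I must bump $\mu$ out to the boundary $\bd B^\circ$: if $\mu\notin\bd B^\circ$ then $\mu\in\interior B^\circ$, and since $\langle x,\mu\rangle = \|x\| \ge 0$, scaling $\mu$ up to the first $\xi\in\bd B^\circ$ on the ray through $\mu$ stays in $\h^+$ (a chamber is a cone) and only increases $\langle x,\mu\rangle$, contradicting maximality unless $\mu$ was already on the boundary — so in fact $\mu\in\bd B^\circ\cap\h^+$ with $\|x\|_\mu \ge \langle x,\mu\rangle = \|x\|$, and we are done.

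The main obstacle I anticipate is the bookkeeping in the last two reduction steps when $x$ (hence the maximizing $\mu$) lies on a wall of the chamber rather than in its interior: one must be careful that the Weyl orbit argument of Lemma \ref{rearrengementineq} and the "scale to the boundary" argument both survive in the boundary/degenerate case, and that $\bd B^\circ\cap\h^+$ is genuinely nonempty and picks up the maximizer. This is handled by noting $\h^+$ is a closed convex cone, $B^\circ$ is a compact convex neighbourhood of $0$, and $\langle x,\cdot\rangle$ is linear, so the ray argument is clean; the only real subtlety is that one should phrase things with the \emph{closed} chamber $\overline{\h^+}$ throughout (which is what $\h^+$ denotes here), so that taking suprema and limits stays inside the set. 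A secondary point worth a sentence is justifying $\|x\| = \sup_{\xi\in B^\circ}\langle x,\xi\rangle$ for a Finsler (only positively homogeneous, not balanced) norm: this is the standard fact that the gauge of a closed convex set containing $0$ in its interior is the support function of the polar, and $B^\circ$ is automatically $\Ad$-invariant because $B$ is.
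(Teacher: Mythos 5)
Your proof is correct; it uses the same ingredients as the paper (convex duality, Corollary \ref{projball}, the rearrangement inequality) but runs them in the dual direction. The paper restates the claim as an equality of unit balls, reduces to $C=B\cap\h$, and proves the hard inclusion by Hahn--Banach separation inside $\h$: a point $x\notin C$ is separated by some $\lambda$, which is Weyl-moved into $\h^+$, with inequality (\ref{reaineq}) preserving the separation and Corollary \ref{projball} (applied to $B$, together with Lemma \ref{hofercartan} and $\W$-invariance) certifying that $\mu=w.\lambda\in\bd B^\circ$. You instead start from the support-function identity $\|x\|=\sup_{\xi\in B^\circ}\langle x,\xi\rangle$, take a maximizer, project it into $\h$ by Corollary \ref{projball} applied to the $\Ad$-invariant set $B^\circ$, Weyl-move it into $\h^+$ without decreasing the pairing, and scale along the ray to land on $\bd B^\circ$. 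Your organization makes the easy inequality a one-liner from the $\Ad$-invariance of $B^\circ$ (no Lemma \ref{hofercartan} or chamber reduction needed) and shows the supremum is attained for $x\ne 0$; the paper's organization keeps everything inside $\h$ after one reduction, and its normalization $\max\langle C,\lambda\rangle=1$ of the separating functional yields boundary membership without any compactness, attainment, or scaling discussion. The two caveats you flag are genuine but harmless: the gauge--support duality holds for any closed convex $B$ with $0\in\interior B$ (no symmetry of the norm is needed), and the chamber arguments must indeed be read with the closed chamber $\h^+$, exactly as in the paper's own proof; on walls it is cleanest to invoke inequality (\ref{reaineq}) directly rather than the equality case of Lemma \ref{rearrengementineq}. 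Your ray argument also supplies $\bd B^\circ\cap\h^+\ne\emptyset$, which takes care of $x=0$.
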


\begin{proof}
This is equivalent to $B=\bigcap_{\mu\in \bd B^\circ \cap\h^+}B_\mu$, and by $\Ad$-invariance of the norms this is equivalent to 
$$
B\cap\h=\bigcap_{\mu\in \bd B^\circ \cap\h^+}B_\mu\cap\h.
$$
We denote $C=B\cap\h$ and $C_\mu=B_\mu\cap\h$, so have to show that $C=\bigcap_{\mu\in \bd B^\circ \cap\h^+}C_\mu$.

($\subseteq$) Since $C$ and $C_\mu$ are $\W$-invariant it is enough to prove that if $\mu\in B^\circ\cap\h^+$ then $C\cap\h^+\subseteq C_\mu\cap\h^+$. By Lemma \ref{hofercartan} and Lemma \ref{rearrengementineq} we have 
$$
C_\mu\cap\h^+=\{x\in\h^+:\langle x,\mu\rangle\leq 1\}
$$
Since $\mu\in B^\circ\cap\h^+\subset B^{\circ}\cap \h=C^{\circ}$, then  for $x\in C\cap\h^+\subset C$ we have $\langle \mu,x\rangle\leq 1$, that is $x\in C_\mu\cap\h^+$.

($\supseteq$) If $x\in\h^+$ and $x\notin C$ by the Hahn-Banach separation theorem there is a $\lambda\in\h$ such that $\langle x,\lambda\rangle >1$ and $\max_{c\in C}\langle c,\lambda\rangle=1$. By definition of the Weyl chamber there is a $w\in \W$ such that $w.\lambda\in\h^+$ and we set $\mu=w.\lambda$. Then since $x,\mu\in \h^+$ by inequality (\ref{reaineq}) we have 
$$
\langle x,\mu\rangle\geq \langle x,\lambda\rangle>1
$$
thus  $x\notin C_\mu$. We have to show that $\mu\in \bd B^\circ\cap\h^+$. By the $\W$-invariance of $C$ and of the inner product we have $\max\langle C,\mu\rangle =1$ and by Corollary \ref{projball} we have 
$$\max\langle B,\mu\rangle=\max\langle p(B),\mu\rangle=\max\langle C,\mu\rangle=1,$$
hence $\mu\in \bd B^\circ$.
\end{proof}

\subsection{The case of $\SU_n(\mathbb C)$}

We now prove the convexity results in the setting of this compact semisimple group.

\begin{thm}\label{convdistancAdinv}
Let $u(t)=e^{tx}e^{y}=e^{x(t)}\in \SU(n)$ be a segment defined for $t\in [t_1,t_2]$. If $\|\cdot\|$ is an $\Ad$-invariant norm on $\su(n)$ such that for $t\in [t_1,t_2]$ we have $\|x(t)\|_{\infty}<\frac{\pi}{2}$, then $t\mapsto\|x(t)\|$ is convex for $t\in [t_1,t_2]$. 
\end{thm}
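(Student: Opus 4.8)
The plan is to reduce the $\Ad$-invariant norm case in $\SU(n)$ to the unitarily invariant norm case in $\U(n)$, which was already settled in Theorem~\ref{convdistancNUI}, by exploiting the structural description of $\Ad$-invariant norms furnished by Theorem~\ref{normsuphofer}. Since a supremum of convex functions is convex, it suffices to prove convexity of $t\mapsto\|x(t)\|_\mu$ for each orbit norm $\|\cdot\|_\mu$ with $\mu\in\h^+$ (taking $\h$ to be the diagonal matrices in $\su(n)$). By Example~\ref{ejsun}, if $\mu$ has eigenvalues $i\mu_1\ge\cdots\ge i\mu_n$ and $x(t)$ has eigenvalues $ix_1(t)\ge\cdots\ge ix_n(t)$, then $\|x(t)\|_\mu=\sum_i\mu_i x_i(t)$; writing this via Abel summation with $\mu_{n+1}:=\mu_n$ (or handling the trace-zero normalization directly) turns it into a nonnegative combination of partial sums $\sum_{k=1}^m x_k(t)$ plus a linear term, so it is enough to know each $t\mapsto\sum_{k=1}^m x_k(t)$ is convex.

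The key observation is that $x(t)=\log(e^{tx}e^y)$ here is the skew-Hermitian logarithm; writing $x=i\tilde x$, $y=i\tilde y$ with $\tilde x,\tilde y$ Hermitian, we have $u(t)=e^{it\tilde x}e^{i\tilde y}$, exactly the segment studied in Section~3, and $x_k(t)$ are precisely the arguments $\theta_k(t)$ of the eigenvalues of $u(t)$. The hypothesis $\|x(t)\|_\infty<\pi/2$ says all $|\theta_k(t)|<\pi/2$, which is equivalent to $\|u(t)-1\|<\sqrt2$ — this is the same reformulation used in the proof of Theorem~\ref{convsumadistint} and Theorem~\ref{convdistancNUI}. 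Hence Theorem~\ref{convsumdistgeneral} applies verbatim and gives convexity of $t\mapsto\sum_{i=1}^m\theta_i(t)=\sum_{i=1}^m x_i(t)$ for every $1\le m\le n$. One must also note that the trace-zero condition $\sum_i x_i(t)=0$ (from $u(t)\in\SU(n)$, together with Remark~\ref{l1}) makes the full sum linear, which only helps.

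Assembling: fix $\mu\in\bd B^\circ\cap\h^+$. Expand $\|x(t)\|_\mu=\sum_{i=1}^n\mu_i x_i(t)=\sum_{i=1}^{n-1}(\mu_i-\mu_{i+1})\sum_{k=1}^i x_k(t)+\mu_n\sum_{k=1}^n x_k(t)$. Each coefficient $\mu_i-\mu_{i+1}\ge0$ since $\mu\in\h^+$, each $t\mapsto\sum_{k=1}^i x_k(t)$ is convex by the preceding paragraph, and the last term is linear, so $t\mapsto\|x(t)\|_\mu$ is convex. Then by Theorem~\ref{normsuphofer}, $\|x(t)\|=\sup_{\mu\in\bd B^\circ\cap\h^+}\|x(t)\|_\mu$ is a supremum of convex functions, hence convex on $[t_1,t_2]$.

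The main obstacle I anticipate is mostly bookkeeping rather than conceptual: correctly matching the two normalizations (skew-Hermitian $x(t)\in\su(n)$ versus the Hermitian $-i\log u$ used in Section~3 and in the statement of Theorem~\ref{convsumdistgeneral}, where the relevant objects are "angles" $\theta_k$ and "singular values" $\sigma_k$ of $-i\log u$), and making sure the Abel-summation step uses the right boundary convention for a trace-zero string so that no spurious sign or missing linear term appears. One should also double-check that $\|x(t)\|_\infty<\pi/2$ genuinely implies $\|u(t)-1\|_\infty<\sqrt2$ uniformly in $t$ (it does, since $|e^{i\theta}-1|<\sqrt2\iff|\theta|<\pi/2$ for $|\theta|<\pi$), so that Theorem~\ref{convsumdistgeneral} is applicable on the whole interval.
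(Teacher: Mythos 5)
Your proposal is correct and follows essentially the same route as the paper: reduce to orbit norms via Theorem~\ref{normsuphofer}, express $\|x(t)\|_\mu$ through Example~\ref{ejsun}, apply Abel summation, and invoke Theorem~\ref{convsumdistgeneral} for convexity of the partial sums of angles under the equivalent hypothesis $\|u(t)-1\|_\infty<\sqrt{2}$. Your explicit handling of the boundary term in the Abel summation (noting the full sum is linear by Remark~\ref{l1}, so the possibly negative coefficient $\mu_n$ is harmless) is in fact slightly more careful than the paper's wording, which sets $\mu_{n+1}=0$ and calls the result a sum of convex functions.
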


\begin{proof}
By Theorem \ref{normsuphofer} we have 
$$\|x\|=\sup_{\mu\in \bd B^\circ \cap\h^+}\|x\|_\mu.$$ 
If we show that $\|x(t)\|_\mu$ is convex for any $\mu\in\su(n)$ then the conclusion follows, since $\|x(t)\|$ is a supremum of convex functions so it is convex. 

Let $\mu\in\su(n)$ have eigenvalues $i\mu_1,\dots,i\mu_n$ with $\mu_1\geq\mu_2\geq\dots\geq\mu_n$. Let $\theta_1(t)\geq\theta_2(t)\geq\dots\geq\theta_n(t)$ be the eigenvalues of $x(t)$. The conditions of Theorem \ref{convsumdistgeneral} are satisfied and by this theorem 
$t\mapsto \sum_{i=1}^m\theta_i(t)$ are convex functions for all $m$. Note that if we set $\mu_{n+1}=0$ then by Example \ref{ejsun} we have 
$$\|x(t)\|_{\mu}=\mu_1\theta_1(t)+\dots+\mu_{n}\theta_{n}(t)=\sum_{k=1}^n(\mu_{k}-\mu_{k+1})\sum_{j=1}^k\theta_j(t)$$
is a sum of convex functions, hence it is convex. This concludes the proof.  
\end{proof}

\subsection{Strict convextity}

\begin{thm}
Let $u(t)=e^{itx}e^{iy}=e^{ix(t)}$ be a path such that $\|e^{itx}e^{iy}-1\|< \sqrt{2}$, with eigenvalues $e^{i\theta_k(t)}$ and $\theta_1(t)\geq\theta_2(t)\geq\dots\geq\theta_n(t)$ for all $t\in [0,t_0]$. Let $\mu$ be a diagonal matrix with $\mu_1>\mu_2>\dots>\mu_n$, consider 
$$f_{\mu}(z(t))=\sup_{u\in U}\tr(u\mu u^{-1}z(t))=\mu_1\theta_1(t)+\mu_2\theta_2(t)+\dots+\mu_n\theta_n(t).$$
If there is a $t_*\in [a,b]$ such that all the eigenvalues of $u(t)$ are distinct and such that $f_{\mu}''(z(t_*))=0$ then $x$ and $y$ commute. 
\end{thm}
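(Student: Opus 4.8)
The plan is to reduce the vanishing of $f_\mu''(z(t_*))$ to the vanishing of each partial-sum second derivative $\sum_{k=1}^m\theta_k''(t_*)$, and then to invoke the structural result already established for that case (the Proposition following Lemma~\ref{caso1}), which gives $[x,y]=0$. The key algebraic observation is the Abel-summation identity already used in the proof of Theorem~\ref{convdistancAdinv}: writing $\mu_{n+1}=0$ and $S_m(t)=\sum_{j=1}^m\theta_j(t)$, we have $f_\mu(z(t))=\sum_{m=1}^n(\mu_m-\mu_{m+1})S_m(t)$, hence
$$
f_\mu''(z(t))=\sum_{m=1}^n(\mu_m-\mu_{m+1})\,S_m''(t).
$$
Since $\mu_1>\mu_2>\dots>\mu_n$ we have $\mu_m-\mu_{m+1}>0$ for $m=1,\dots,n-1$, and the coefficient of $S_n$ is $\mu_n$ (which may be negative), but by Remark~\ref{l1} the function $S_n(t)=\sum_k\theta_k(t)$ is affine in $t$, so $S_n''\equiv 0$ and that term contributes nothing. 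Therefore $f_\mu''(z(t_*))$ is a strictly-positively-weighted sum of the quantities $S_1''(t_*),\dots,S_{n-1}''(t_*)$. By Theorem~\ref{convsumadistint} (applicable at $t_*$ since the eigenvalues of $u(t_*)$ are distinct and $\|u(t_*)-1\|<\sqrt2$), each $S_m''(t_*)\ge 0$. A sum of nonnegative terms with strictly positive coefficients vanishes only if every term vanishes, so $f_\mu''(z(t_*))=0$ forces $S_m''(t_*)=0$ for all $1\le m\le n-1$.

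At this point I would apply the Proposition preceding Section~3.4, which states precisely that if $\sum_{k=1}^m\theta_k''(s)=0$ for some $s$ and all $1\le m\le n-1$, and the eigenvalues of $u(s)$ are distinct, then the span of $\{v_1(s),\dots,v_m(s)\}$ is invariant for both $x$ and $y$ and $[x,y]=0$. (One small point to address: that Proposition is stated for the path $u(t)=e^{itx}e^{iy}$ with the convexity hypothesis $\|u(t)-1\|<\sqrt2$, which is exactly our hypothesis here, so it applies verbatim at $t=t_*$.) This yields $[x,y]=0$, completing the proof. An equivalent route, if one prefers to keep the argument self-contained at $t_*$, is to observe that distinctness of the eigenvalues at $t_*$ makes $\sin(\theta_k(t_*)-\theta_j(t_*))>0$ strictly for $k<j$, so that each term in the expression \eqref{theta3} for $S_m''(t_*)$ is strictly positive unless $\langle x v_k(t_*),v_j(t_*)\rangle=0$; running $m$ from $1$ to $n-1$ forces $\langle x v_k(t_*),v_j(t_*)\rangle=0$ for all $k\ne j$, i.e. $x$ is diagonal in the eigenbasis $\{v_k(t_*)\}$ of $u(t_*)$, and then the argument in Lemma~\ref{caso1}(b)$\Rightarrow$(c) shows $y$ is diagonal in the same basis as well.

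The only genuine subtlety is bookkeeping with the index $n$: one must notice that the top coefficient $\mu_n$ need not be positive (indeed for a symmetric $\mu\in\su(n)$ one has $\sum\mu_i=0$, so $\mu_n<0$), and that the argument survives only because $S_n''\equiv 0$ by the trace computation in Remark~\ref{l1}; skipping this observation would leave a gap. Everything else is a direct assembly of results already proved in the excerpt, so I do not expect a substantive obstacle beyond that.
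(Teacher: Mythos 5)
Your proposal is correct and follows essentially the same route as the paper: the Abel-summation identity with $\mu_{n+1}=0$, dropping the $k=n$ term via the linearity of $\sum_k\theta_k$ from Remark~\ref{l1}, concluding $\sum_{j=1}^m\theta_j''(t_*)=0$ for $m=1,\dots,n-1$ from positivity of the gaps $\mu_m-\mu_{m+1}$, and then using formula \eqref{theta3} with $\sin(\theta_k(t_*)-\theta_j(t_*))>0$ to force $\langle x v_k(t_*),v_j(t_*)\rangle=0$ for $k\ne j$, whence $x$ commutes with $u(t_*)$ and hence with $e^{iy}$ and $y$. Your explicit flagging of the possibly negative coefficient $\mu_n$ is exactly the point the paper handles the same way, so there is no substantive difference.
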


\begin{proof}
Set $\mu_{n+1}=0$ and note that $\sum_{j=1}^n\theta_j(t)$ is linear by Remark \ref{l1}. Then 
$$f_{\mu}(z(t))=\mu_1\theta_1(t)+\mu_2\theta_2(t)+\dots+\mu_n\theta_n(t)=\sum_{k=1}^n(\mu_{k}-\mu_{k+1})\sum_{j=1}^k\theta_j(t).$$
Let $t_*\in [a,b]$ such that all the eigenvalues of $u(t)$ are distinct and such that $f_{\mu}''(z(t_*))=0$. Then  
$$\sum_{k=1}^{n-1}(\mu_{k}-\mu_{k+1})\sum_{j=1}^k\theta''_j(t_*)=0$$
and since each term in the sum is non negative this implies that for all $k=1,\dots,n-1$ we have $\sum_{j=1}^k\theta''_j(t_*)=0$. By Proposition \ref{propconvsum}
\begin{equation*}
\sum_{k=1}^m\theta_{k}''(t_*)= 2 \sum_{k=1}^m\sum_{j=m+1}^n \frac{ \sin(\theta_{k}(t_*)-\theta_{j}(t_*))}{|e^{i\theta_{k}(t_*)}-e^{i\theta_{j}(t_*)}|^2}|\langle x v_{k}(t_*), v_{j}(t_*)\rangle|^2 
\end{equation*}
for $m=1,\dots,n-1$, where $(v_j(t_*))_{j=1}^n$ is an orthonormal basis of $u(t_*)$. Since $\|e^{it_*x}e^{iy}-1\|\leq r< \sqrt{2}$ we have
$$\frac{ \sin(\theta_{k}(t_*)-\theta_{j}(t_*))}{|e^{i\theta_{k}(t_*)}-e^{i\theta_{j}(t_*)}|^2}>0,$$
hence for all $j>k$ we have 
$$\langle x v_{k}(t_*), v_{j}(t_*)\rangle=0.$$
This implies that $x$ commutes with $u(t_*)=e^{it_*x}e^{iy}$, so it also commutes with $e^{-it_*x}e^{it_*x}e^{iy}=e^{iy}$. Finally, this implies that $x$ commutes with $-i \log(e^{iy})=y$.
\end{proof}

Before the next theorem, we need a couple of remarks:

\begin{rem}\label{lemconvexidad}
If $(f_n)_n$ be a sequence of $C^2$ convex functions from $[0,t_0]$ to $\R$ which converges uniformly to a function which is linear in some interval $[a,b]\subseteq [0,t_0]$. Then there is a subsequence $(f_{n_i})_i$ and a sequence $(t_i)_i$ such that $t_i\to t_*\in [a,b]$  and $f_{n_i}''(t_i)\to 0$. In fact, note that  $\liminf \left(\inf_{t\in [a,b]}f''(t)\right)=0$, otherwise the sequence $(f_n)_n$ cannot converge uniformly to a linear function in $[a,b]$.
\end{rem}

\begin{rem}[Hoffman-Wielandt inequality]Let $a,b$ be normal $n\times n$ matrices, denote $\lambda_k(a)$ and $\lambda_k(b)$ their respective eigenvalues. If $S_n$ is the permutation group of $n$ elements, then 
$$
\min_{\sigma\in\mathcal{S}_n}\sum_{k=1}^n\left|\lambda_k(a)-\lambda_{\sigma(k)}(b)\right|^2 \leq\left\Vert a-b\right\Vert^2.
$$
where $\|x\|_2=\sqrt{\tr(x^*x)}$ is the Frobenius (also called-Hilbert-Schmidt) norm of $x$. For a proof see for instance \cite[Theorem VI.4.1]{bhatia}.
\end{rem}

\begin{thm}\label{striconv}
Let $u(t)=e^{itx}e^{iy}=e^{iz(t)}$ be a curve such that $\|e^{itx}e^{iy}-1\|< \sqrt{2}$ for all $t\in [0,t_0]$, with eigenvalues $e^{i\theta_k(t)}$ with $\theta_1(t)\geq\theta_2(t)\geq\dots\geq\theta_n(t)$ for all $t\in [0,t_0]$. Let $\mu$ be a diagonal matrix with entries $\mu_1>\mu_2>\dots>\mu_n$. Then the function
$$f_{\mu}(z(t))=\sup_{u\in U}\tr(u\mu u^{-1}z(t))=\mu_1\theta_1(t)+\mu_2\theta_2(t)+\dots+\mu_n\theta_n(t)$$
is strictly convex if and only if $x$ and $y$ do not commute. If $x$ and $y$ commute then the function is piece wise linear. 
\end{thm}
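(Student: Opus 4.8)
My plan is to prove the two implications separately, dealing first with the commuting case, which also yields the last assertion. Suppose $[x,y]=0$. Being commuting self-adjoint matrices, $x$ and $y$ share an orthonormal eigenbasis $\{e_k\}$, say $xe_k=x_ke_k$ and $ye_k=y_ke_k$ with $x_k,y_k\in\R$, so that $u(t)e_k=e^{i(tx_k+y_k)}e_k$. Since $\|u(t)-1\|<\sqrt2$ forces every angle of $u(t)$ into $(-\pi/2,\pi/2)$, each continuous function $t\mapsto tx_k+y_k$ takes values in $\bigcup_{m\in\Z}(2\pi m-\pi/2,2\pi m+\pi/2)$ and hence, by connectedness of $[0,t_0]$, in a single interval $(2\pi m_k-\pi/2,2\pi m_k+\pi/2)$; thus the eigenvalue $e^{i(tx_k+y_k)}$ of $u(t)$ contributes the angle $tx_k+y_k-2\pi m_k$, an affine function of $t$. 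Consequently the ordered angles $\theta_1(t)\ge\cdots\ge\theta_n(t)$ are obtained by sorting $n$ affine functions, so each of them, and therefore also $f_\mu(z(t))=\sum_k\mu_k\theta_k(t)$, is continuous and piecewise affine. In particular $f_\mu(z(\cdot))$ is not strictly convex, which proves the last sentence together with the implication ``$f_\mu$ strictly convex $\Rightarrow[x,y]\ne0$''.

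For the converse I first note that $f_\mu(z(\cdot))$ is convex: with $\mu_{n+1}:=0$ one has $f_\mu(z(t))=\sum_{k=1}^{n-1}(\mu_k-\mu_{k+1})\sum_{j=1}^k\theta_j(t)+\mu_n\sum_{j=1}^n\theta_j(t)$, a positive combination (since $\mu_k-\mu_{k+1}>0$) of the convex functions $t\mapsto\sum_{j\le k}\theta_j(t)$ of Theorem \ref{convsumdistgeneral}, plus the linear function $\sum_{j=1}^n\theta_j(t)$ of Remark \ref{l1}. Assume now $[x,y]\ne0$ and, aiming at a contradiction, that $f_\mu(z(\cdot))$ is not strictly convex; being convex, it is then affine on some nondegenerate interval $[a,b]\subseteq[0,t_0]$. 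Fix $t_*\in(a,b)$ and a number $r$ with $\sup_{t\in[a,b]}\|u(t)-1\|<r<\sqrt2$. By Proposition \ref{aproxdistint} there exist $y_j\to y$ and a fixed open interval $I$ with $t_*\in I\subseteq(a,b)$ such that, for every $j$, the segment $u_j(t)=e^{itx}e^{iy_j}$ satisfies $\|u_j(t)-1\|<r$ and has distinct eigenvalues for all $t\in I$. Choose a closed subinterval $[c,d]$ with $t_*\in(c,d)\subseteq[c,d]\subseteq I$.

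Write $g_j(t)=f_\mu(z_j(t))=\sum_k\mu_k\theta_{k,j}(t)$, where $\theta_{1,j}(t)>\cdots>\theta_{n,j}(t)$ are the ordered angles of $u_j(t)$. On $[c,d]$ each $g_j$ is of class $C^2$ (the eigenvalues of $u_j$ are simple there) and convex (apply to $u_j$ the decomposition above: $\sum_{j\le k}\theta_{\cdot,j}$ is convex by Theorem \ref{convsumadistint}, and $\sum_{l=1}^n\theta_{l,j}'\equiv\operatorname{Tr}x$ by the first variation formula \eqref{thetak}). Moreover $g_j\to g:=f_\mu(z(\cdot))$ uniformly on $[c,d]$: indeed $\|u_j(t)-u(t)\|=\|e^{iy_j}-e^{iy}\|\to0$ uniformly in $t$, so $\log u_j\to\log u$ uniformly on $[c,d]$ (the principal logarithm is uniformly continuous near the compact arc $u([c,d])$), and then $|g_j(t)-g(t)|\le(\max_l|\mu_l|)\sum_l|\theta_{l,j}(t)-\theta_l(t)|\le(\max_l|\mu_l|)\sqrt n\,\|\log u_j(t)-\log u(t)\|_2$ by the Hoffman--Wielandt inequality. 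Since $g$ is affine on $[c,d]$, Remark \ref{lemconvexidad} applies and, after passing to a subsequence, provides $t_i\to t_{**}\in[c,d]$ with $g_{j_i}''(t_i)\to0$. Differentiating the decomposition twice gives $g_{j_i}''=\sum_{k=1}^{n-1}(\mu_k-\mu_{k+1})\sum_{l=1}^k\theta_{l,j_i}''$, a positive combination of the nonnegative quantities in \eqref{theta3}; hence $\sum_{l=1}^k\theta_{l,j_i}''(t_i)\to0$ for each $k$, and since each is a sum of nonnegative terms we conclude, for every pair $l<m$,
$$\frac{\sin(\theta_{l,j_i}(t_i)-\theta_{m,j_i}(t_i))}{|e^{i\theta_{l,j_i}(t_i)}-e^{i\theta_{m,j_i}(t_i)}|^2}\,|\langle x\,v_{l,j_i}(t_i),v_{m,j_i}(t_i)\rangle|^2\longrightarrow0.$$

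Finally, pass to a further subsequence so that $v_{l,j_i}(t_i)\to w_l$ for every $l$; then $\{w_l\}$ is an orthonormal basis and, letting $i\to\infty$ in $u_{j_i}(t_i)v_{l,j_i}(t_i)=e^{i\theta_{l,j_i}(t_i)}v_{l,j_i}(t_i)$ (here $u_{j_i}(t_i)\to u(t_{**})$ and $\theta_{l,j_i}(t_i)\to\theta_l(t_{**})$ by continuity of ordered spectra), we get $u(t_{**})w_l=e^{i\theta_l(t_{**})}w_l$. For a pair $l<m$ with $\theta_l(t_{**})>\theta_m(t_{**})$ the coefficient above tends to a strictly positive limit — all angles lie in $(-\pi/2,\pi/2)$, so their differences lie in $(0,\pi)$ — which forces $\langle x\,w_l,w_m\rangle=0$; since $x$ is self-adjoint, this holds whenever $w_l$ and $w_m$ belong to distinct eigenspaces of $u(t_{**})$. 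As the $w_l$ lying in a fixed eigenspace span it, $x$ maps each eigenspace of $u(t_{**})$ into itself, so $[x,u(t_{**})]=0$; then $[x,e^{iy}]=[x,e^{-it_{**}x}u(t_{**})]=0$, and since $\|y\|_\infty<\pi/2$ we have $y=-i\log(e^{iy})$, whence $[x,y]=0$, contradicting our assumption. Therefore $f_\mu(z(\cdot))$ is strictly convex. The step I expect to be most delicate is this last limit: the coefficients $\sin(\cdot)/|e^{i\cdot}-e^{i\cdot}|^2$ blow up along pairs of angles that collide as $i\to\infty$, and the key point is that no information is required from such pairs, because orthogonality of $x w_l$ against all $w_m$ in the genuinely different eigenspaces of $u(t_{**})$ already forces $x$ to respect the eigenspace decomposition of $u(t_{**})$.
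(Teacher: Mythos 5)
Your argument is correct, and its skeleton is the paper's: approximate by segments with simple spectrum (Proposition \ref{aproxdistint}), exploit the positive-coefficient decomposition $f_\mu(z(t))=\sum_{k}(\mu_k-\mu_{k+1})\sum_{j\le k}\theta_j(t)$ plus the linearity of the full sum (Remark \ref{l1}), invoke Remark \ref{lemconvexidad} to get points $t_i\to t_{**}$ at which the second derivatives of the approximants vanish asymptotically, and then use \eqref{theta3} to force commutation of $x$ with $u(t_{**})$, hence with $e^{iy}$ and with $y$. Where you genuinely diverge is the endgame. The paper never takes limits of eigenvectors: since $\|u_l(t)-1\|\le r<\sqrt2$ keeps all angle differences away from $\pi$, the coefficients $\sin(\theta_{l,k}-\theta_{l,j})/|e^{i\theta_{l,k}}-e^{i\theta_{l,j}}|^2$ in \eqref{theta3} admit a \emph{uniform} lower bound $c>0$, so every off-diagonal entry $\langle x v_{l,k}(t_l),v_{l,j}(t_l)\rangle$ tends to zero, and the identity $\|[x,u_l(t_l)]\|_2^2=\sum_{j,k}|e^{i\theta_{l,k}(t_l)}-e^{i\theta_{l,j}(t_l)}|^2|\langle xv_{l,k}(t_l),v_{l,j}(t_l)\rangle|^2$, whose weights are bounded above by $4$, gives $[x,u(t_{**})]=0$ at once; colliding angles cause no trouble there because the weights in the commutator formula are bounded, not because those pairs are discarded. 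You instead pass to a convergent subsequence of the eigenframes, keep only pairs with distinct limiting angles (where the coefficient has a positive limit), and deduce that $x$ preserves each eigenspace of $u(t_{**})$; this is also valid, at the price of an extra compactness step and the bookkeeping that the surviving orthogonality relations suffice, and it makes explicit why the blowing-up coefficients of colliding pairs are irrelevant. Two side remarks: your treatment of the commuting case (sorting finitely many affine angle functions) is more detailed than the paper's one-line claim and is a welcome addition; and, like the paper, you rely on the standing bound $\|y\|_\infty\le\pi/2$ (the unnumbered remark preceding Lemma \ref{caso1}) to pass from $[x,e^{iy}]=0$ to $[x,y]=0$, so no new hypothesis is introduced.
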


\begin{proof} If $x$ and $y$ commute then $z(t)=tx+y$ and it is easy to check that $f_{\mu}(z(t))$ is piecewise linear.  So let  $\mu_{n+1}=0$ and note that $\sum_{j=1}^n\theta_j(t)$ is linear by Remark \ref{l1}. Then 
$$f_{\mu}(z(t))=\mu_1\theta_1(t)+\mu_2\theta_2(t)+\dots+\mu_n\theta_n(t)=\sum_{k=1}^n(\mu_{k}-\mu_{k+1})\sum_{j=1}^k\theta_j(t).$$
If $f_{\mu}(z(t))$ is not strictly convex, then because it is convex it must be linear in some interval $[a,b]\subseteq [0,t_0]$, so for each $k=1,\dots,n$ the sum $\sum_{j=1}^k\theta_j(t)$ must be linear for $t\in [a,b]$.

By Proposition \ref{aproxdistint} there is a sequence $(y_l)_l$ with $y_l\to y$ and such that for each $l\in \N$ the curves $u_l(t)=e^{itx}e^{iy_l}$ satisfy $\|e^{itx}e^{iy_l}-1\|\leq r< \sqrt{2}$ and has distinct eigenvalues 
$$\theta_{l,1}(t)>\theta_{l,2}(t)>\dots>\theta_{l,n}(t).$$ 
By the Hoffman-Wielandt inequality (see the previous remark) for each $k=1,\dots,n-1$ we have $\sum_{j=1}^k\theta_{l,j}(t)\to\sum_{j=1}^k\theta_j(t)$ uniformly, hence 
$$
f_l(t)=\sum_{k=1}^{n-1}(\mu_{k}-\mu_{k+1})\sum_{j=1}^k\theta_{l,j}(t)
$$
converges uniformly to a function which is linear in $[a,b]$. Therefore the previous remark implies that there is a subsequence of $(f_l)_l$, which we still denote by $(f_l)_l$, a sequence $t_l\to t_*$ such that $f_l''(t_l)\to 0$. Since for $m=1,\dots,n-1$ we have 
$$(\mu_{m}-\mu_{m+1})\sum_{j=1}^m\theta''_{l,j}(t_l)\leq f_l''(t_l)=\sum_{k=1}^{n-1}(\mu_{k}-\mu_{k+1})\sum_{j=1}^k\theta''_{l,j}(t_l)$$
we conclude that for $m=1,\dots,n-1$
$$\sum_{j=1}^m\theta''_{l,j}(t_l)\to_l 0.$$
Therefore, using formula (\ref{theta3}) of Proposition \ref{propconvsum} 
\begin{equation*}
\sum_{k=1}^m\theta_{l,k}''(t_l)= 2 \sum_{k=1}^m\sum_{j=m+1}^n \frac{ \sin(\theta_{l,k}(t_l)-\theta_{l,j}(t_l))}{|e^{i\theta_{l,k}(t_l)}-e^{i\theta_{l,j}(t_l)}|^2}|\langle x v_{l,k}(t_l), v_{l,j}(t_l)\rangle|^2 \to 0
\end{equation*}
for $m=1,\dots,n-1$. Since $\|e^{itx}e^{iy_l}-1\|\leq r< \sqrt{2}$, for $k>j$
$$\frac{ \sin(\theta_{l,k}(t)-\theta_{l,j}(t))}{|e^{i\theta_{l,k}(t)}-e^{i\theta_{l,j}(t)}|^2}\geq c>0.$$
The matrix $x$ is self-adjoint so we conclude that for all $j\neq k$
$$|\langle x v_{l,k}(t_l), v_{l,j}(t_l)\rangle|\to_l 0.$$

We now compute the Hilbert-Schmidt norm of $[x,u_l(t_l)]$ using the orthonormal basis $(v_{l,k}(t_l))_k$ of $u_l(t_l)$:
$$\|[x,u_l(t_l)]\|_2^2=\sum_{j,k=1}^n|e^{i\theta_{l,k}(t_l)}-e^{i\theta_{l,j}(t_l)}|^2|\langle xv_{l,k}(t_l),v_{l,j}(t_l)\rangle|^2.$$
Therefore $\|[x,u_l(t_l)]\|_2\to 0$ and $[x,u_l(t_l)]\to [x,u(t_*)]$ so that $[x,e^{it_*x}e^{iy}]=0$. This says that $x$ commutes with $u(t_*)=e^{it_*x}e^{iy}$, so it also commutes with $e^{-it_*x}e^{it_*x}e^{iy}=e^{iy}$. Finally, this implies that $x$ commutes with $-i \log(e^{iy})=y$ proving the main assertion of the theorem. 
\end{proof}

\begin{rem}[Repulsion of angles]  Note that  (notation as in the previous proof) we have
\begin{align*}
\sum_{k=1}^m\theta_{l,k}''(t_l) &= 2 \sum_{k=1}^m\sum_{j=m+1}^n \frac{ \sin(\theta_{l,k}(t_l)-\theta_{l,j}(t_l))}{|e^{i\theta_{l,k}(t_l)}-e^{i\theta_{l,j}(t_l)}|^2}|\langle x v_{l,k}(t_l), v_{l,j}(t_l)\rangle|^2 \\
&= 2\sum_{k=1}^{m}\sum_{j=m+1}^n \cot\left(\frac{\theta_{l,k}(t)-\theta_{l,j}(t)}{2}\right) |\langle x_l v_{k,l}(t),v_{j,l}(t) \rangle |^2
\end{align*}
where $\cot(\lambda)=\cos(\lambda)\sin(\lambda)^{-1}$ is the contangent function. All the terms are non-negative, but note also that as $\lambda\to 0^+$ (collapsing of different eigenvalues), we have $\cot(\lambda)\to +\infty$.  On the other hand, if the eigenvalues are far apart (i.e. near being opposite), these coefficients are near $0$.
\end{rem}

\section{Convexity of the geodesic distance}

In this section we prove that the distance from a point to a geodesic segment in $\SU_n(\mathbb C)$ is a convex map, for any  $\Ad$-invariant Finsler metric in $\SU_n(\mathbb C)$. We also state the result for invariant metrics in $\U_n(\mathbb C)$.

\begin{defn}\label{lengdist}
For a given $\Ad$-invariant Finsler norm in $\SU_n(\mathbb C)$, the \textit{length} of any smooth path $\gamma:[0,1]\to \SU_n(\mathbb C)$ is given by $L(\gamma)=\int_0^1 \|\gamma(t)^{-1}\gamma'(t)\|dt$. This length is bi-invariant:
$$
L(u\gamma)=L(\gamma u)=L(\gamma), \qquad u\in \SU_n(\mathbb C).
$$

The \textit{Finsler metric} $d(u,v)$ among $u,v\in \SU_n(\mathbb C)$ is defined as the infimum of the lenghts of piecewise smooth paths joining them in $\SU_n(\mathbb C)$. This is a standard distance which makes of $\SU_n(\mathbb C)$ a metric space except for the fact that we have $d(u,v)=d(v,u)$ for all $u,v$ if and only if the norm is fully homogeneous i.e. $\|\lambda v\|=|\lambda|\|v\|$ for any $\lambda\in\mathbb R$. The metric is also bi-invariant:
$$
d(upw,uqw)=d(p,q)\qquad \forall p,q,u,w\in \SU_n(\mathbb C).
$$

A path $\gamma$ in $\SU_n(\mathbb C)$ is a \textbf{geodesic} for the Finsler metric if $L(\gamma)=d$ where $d$ is the distance among the endpoints of $\gamma$.
\end{defn}

\begin{rem} The path $\gamma: [0,1]\ni  t\mapsto e^{tx}e^{y}$ is a geodesic for any $\Ad$-invariant Finsler metric in $\SU_n(\mathbb C)$  as long as $\|x\|_{\infty}\le \pi$, the later being the spectral norm of $x$. See \cite[Theorem 5.19]{larmi}.
\end{rem}

Therefore Theorem \ref{convdistancAdinv}  can be restated as follows:

\begin{thm}\label{convadsu} Let $\|e^{tx}e^{y}-1\|_{\infty}<\sqrt{2}$ in $[0,1]$, let $\|\cdot\|_{\phi}$ be any $\Ad$-invariant Finsler norm in $\su_n(\mathbb C)$, let $d_{\phi}$ be the induced Finsler metric in $\SU_n(\mathbb C)$. Then 
$$
t\mapsto d_{\phi}(1,e^{tx}e^{y})
$$
is convex for $t\in [0,1]$. 
\end{thm}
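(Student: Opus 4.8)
The plan is to read off this statement from Theorem \ref{convdistancAdinv}, once we know that near the identity the Finsler distance $d_{\phi}(1,\cdot)$ coincides with the $\|\cdot\|_{\phi}$-norm of the skew-Hermitian logarithm. Throughout, write $x(t)=\log(e^{tx}e^{y})\in\su_n(\mathbb C)$ for the principal branch; this is well defined and real-analytic because $\|e^{tx}e^{y}-1\|_{\infty}<\sqrt 2<2$ keeps $-1$ out of the spectrum, and it satisfies $e^{x(t)}=e^{tx}e^{y}$. Denote its eigenvalues by $i\theta_1(t),\dots,i\theta_n(t)$ with $\theta_1(t)\ge\dots\ge\theta_n(t)$. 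Exactly as in the proof of Theorem \ref{convsumadistint}, the hypothesis $\|e^{tx}e^{y}-1\|_{\infty}<\sqrt 2$ is equivalent to $|\theta_k(t)|<\pi/2$ for all $k$, i.e. to $\|x(t)\|_{\infty}<\pi/2$; in particular $\|x(t)\|_{\infty}<\pi$ on all of $[0,1]$.

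First I would prove that $d_{\phi}(1,e^{tx}e^{y})=\|x(t)\|_{\phi}$ for every $t\in[0,1]$. Fix $t$ and let $\gamma\colon[0,1]\to\SU_n(\mathbb C)$ be $\gamma(s)=e^{s\,x(t)}$. This path has the admissible form $s\mapsto e^{sx'}e^{y'}$ with $x'=x(t)$ and $y'=0$, and $\|x'\|_{\infty}=\|x(t)\|_{\infty}<\pi$, so the remark preceding the theorem (that is, \cite[Theorem 5.19]{larmi}) says that $\gamma$ is a $d_{\phi}$-geodesic from $1$ to $e^{x(t)}=e^{tx}e^{y}$. Since $\gamma(s)^{-1}\gamma'(s)=x(t)$ is constant in $s$, the length of $\gamma$ is $L(\gamma)=\int_0^1\|x(t)\|_{\phi}\,ds=\|x(t)\|_{\phi}$, hence $d_{\phi}(1,e^{tx}e^{y})=L(\gamma)=\|x(t)\|_{\phi}$.

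Second, I would prove that $t\mapsto\|x(t)\|_{\phi}$ is convex on $[0,1]$, running the argument from the proof of Theorem \ref{convdistancAdinv} (stated there for $\Ad$-invariant norms, but valid verbatim for Finsler norms). By Theorem \ref{normsuphofer} one has $\|x(t)\|_{\phi}=\sup_{\mu}\|x(t)\|_{\mu}$ over a family of $\mu\in\h^{+}$, so it is enough to check that each orbit norm $t\mapsto\|x(t)\|_{\mu}$ is convex. With $\mu_{n+1}=0$, Example \ref{ejsun} gives $\|x(t)\|_{\mu}=\sum_{k=1}^{n}\mu_k\theta_k(t)=\sum_{k=1}^{n}(\mu_k-\mu_{k+1})\sum_{j=1}^{k}\theta_j(t)$, a nonnegative linear combination of the maps $t\mapsto\sum_{j=1}^{k}\theta_j(t)$, each of which is convex by Theorem \ref{convsumdistgeneral} since $\|e^{tx}e^{y}-1\|_{\infty}<\sqrt 2$. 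Hence every $\|x(t)\|_{\mu}$ is convex, and so is their supremum $\|x(t)\|_{\phi}$. Combining this with the first step, $t\mapsto d_{\phi}(1,e^{tx}e^{y})$ is convex on $[0,1]$.

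I do not anticipate a genuine obstacle here: the only substantive ingredient is the identification $d_{\phi}(1,e^{x(t)})=\|x(t)\|_{\phi}$ — the fact that the one-parameter subgroup through a point close to $1$ is length-minimizing even for a possibly non-symmetric Finsler metric — and this is precisely the geodesic statement recalled immediately before the theorem. The remaining steps, namely the equivalence between $\|e^{tx}e^{y}-1\|_{\infty}<\sqrt 2$ and $\|x(t)\|_{\infty}<\pi/2$ and the passage to a supremum of convex orbit norms, are elementary, and the analytic heart of the matter has already been supplied by Theorems \ref{convsumdistgeneral} and \ref{normsuphofer}.
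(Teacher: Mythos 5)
Your proposal is correct and follows essentially the same route as the paper: the paper presents this theorem as a restatement of Theorem \ref{convdistancAdinv}, using the remark (via \cite[Theorem 5.19]{larmi}) that one-parameter geodesics give $d_{\phi}(1,e^{tx}e^{y})=\|x(t)\|_{\phi}$, and then the convexity of $t\mapsto\|x(t)\|_{\phi}$ obtained from Theorem \ref{normsuphofer} and Theorem \ref{convsumdistgeneral}. Your explicit verification that the argument of Theorem \ref{convdistancAdinv} applies verbatim to merely positively homogeneous ($\Ad$-invariant Finsler) norms, since Theorem \ref{normsuphofer} is stated in that generality, is exactly the intended justification.
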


\begin{rem}[The convexity radius is optimal]\label{opt} The condition $\|e^{x(t)}-1\|_{\infty}<\sqrt{2}$ is equivalent to $\|x(t)\|_{\infty}<\pi/2$. If $n=2$  then $\SU_2(\mathbb C)\simeq S^3\subset \mathbb R^4$. If $\|\cdot\|_{\phi}$ is the Frobenius norm, the identification induces the standard Riemannian metric of the sphere $S^3$, and the spectral norm induces the $\ell_{\infty}$ norm of $\mathbb R^4$. But the Cartan algebra is one-dimensional, therefore all metrics are exactly equal there. It is known that the distance from the north pole to a geodesic segment $\gamma_t$ (part of a  great circle) is a convex function in any sphere $S^n$ only if this segment is inside the upper hemisphere. This amounts to the Euclidean linear distance from $\gamma_t$ to the north pole being less than $\sqrt{2}$, which in turn implies that the maximum of the coordinates is less than $\pi/2$.
\end{rem}

\begin{thm}[Strict convexity for orbit norms] If $\mu_1>\mu_2>\dots>\mu_n$,  let $\|\cdot\|_{\mu}$ be the orbit norm in $\su_n(\mathbb C)$, and let $d_{\mu}$ be the induced metric in $\SU_n(\mathbb C)$. If $\|e^{tx}e^y-1\|_{\infty}<\sqrt{2}$ in $[0,1]$ and $[x,y]\ne 0$ then $t\mapsto d_{\mu}(1,e^{tx}e^y)$ is strictly convex in $[0,1]$
\end{thm}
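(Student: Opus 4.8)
The plan is to recognise the distance map as an instance of the function $f_{\mu}$ studied in Theorem \ref{striconv}. Since $x,y\in\su_n(\mathbb C)$ and $\|e^{tx}e^{y}-1\|_{\infty}<\sqrt{2}<2$ for all $t\in[0,1]$, for each such $t$ the principal logarithm $x(t):=\log(e^{tx}e^{y})$ is a well-defined smooth path in $\su_n(\mathbb C)$, and (as in Remark \ref{opt}) the condition $\|e^{x(t)}-1\|_{\infty}<\sqrt{2}$ is equivalent to $\|x(t)\|_{\infty}<\pi/2$.

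First I would identify $d_{\mu}(1,e^{tx}e^{y})=\|x(t)\|_{\mu}$. Applying the geodesic criterion of the remark preceding Theorem \ref{convadsu} (based on \cite[Theorem 5.19]{larmi}) to the one-parameter segment $s\mapsto e^{s\,x(t)}$, $s\in[0,1]$ --- legitimate because $\|x(t)\|_{\infty}<\pi/2\le\pi$ --- this segment is a geodesic of $d_{\mu}$ joining $1$ to $e^{x(t)}=e^{tx}e^{y}$. Since a geodesic realises the distance between its endpoints and its length in the left trivialisation is $\int_0^1\|x(t)\|_{\mu}\,ds=\|x(t)\|_{\mu}$, we get $d_{\mu}(1,e^{tx}e^{y})=\|x(t)\|_{\mu}$. (That $\|\cdot\|_{\mu}$ need not be symmetric is harmless here, as the first endpoint is fixed at $1$.) Next I would put $\|x(t)\|_{\mu}$ in eigenvalue form: letting $i\theta_1(t)\ge\cdots\ge i\theta_n(t)$ be the eigenvalues of the skew-Hermitian matrix $x(t)$, so that $e^{tx}e^{y}$ has eigenvalues $e^{i\theta_k(t)}$, and writing the eigenvalues of $\mu$ as $i\mu_1\ge\cdots\ge i\mu_n$, Example \ref{ejsun} gives $\|x(t)\|_{\mu}=\mu_1\theta_1(t)+\cdots+\mu_n\theta_n(t)$. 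This is exactly $f_{\mu}(z(t))$ in the notation of Theorem \ref{striconv}, taken for the segment $e^{it\tilde x}e^{i\tilde y}$ with $\tilde x=-ix$, $\tilde y=-iy$ Hermitian and $z(t)=-ix(t)$.

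Finally I would invoke Theorem \ref{striconv}: its hypothesis $\|e^{it\tilde x}e^{i\tilde y}-1\|_{\infty}<\sqrt{2}$ on $[0,1]$ holds by assumption, and $[\tilde x,\tilde y]=-[x,y]\ne0$, hence $t\mapsto f_{\mu}(z(t))=d_{\mu}(1,e^{tx}e^{y})$ is strictly convex on $[0,1]$, which is the assertion.

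There is no deep obstacle beyond what is already established: all the analytic content lives in Theorem \ref{striconv}. The two steps that need care are the geodesic identification $d_{\mu}(1,e^{tx}e^{y})=\|x(t)\|_{\mu}$ --- where the spectral bound $\|x(t)\|_{\infty}<\pi/2\le\pi$ is precisely what guarantees minimality, and is therefore the mechanism that pins down the optimal convexity radius --- and the purely clerical matching of the skew-Hermitian conventions of this section with the $i$-twisted Hermitian conventions of Section 3 (in particular the sign in $[\tilde x,\tilde y]=-[x,y]$, which leaves non-commutativity intact).
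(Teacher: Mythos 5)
Your argument is correct and is essentially the paper's: the paper proves this theorem simply by citing Theorem \ref{striconv}, and your write-up just makes explicit the two routine identifications that the paper leaves implicit (that $d_{\mu}(1,e^{tx}e^{y})=\|x(t)\|_{\mu}$ via the geodesic remark based on \cite[Theorem 5.19]{larmi}, and that $\|x(t)\|_{\mu}$ equals $f_{\mu}(z(t))$ via Example \ref{ejsun}, with the harmless sign $[\tilde x,\tilde y]=-[x,y]\ne 0$ when passing to the Hermitian convention). No gaps; this matches the intended proof.
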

\begin{proof}
Immediate from Theorem \ref{striconv}.
\end{proof}

In a similar fashion, we obtain a convexity result for the distance in the setting of unitarily invariant norms of the full unitary group:

\begin{rem}[The case of $\U_n(\mathbb C)$] Let $\|\cdot\|$ be an unitarily invariant norm in $\u_n(\mathbb C)$, the rectifiable length and distance in $\U_n(\mathbb C)$ are defined accordingly (as in Definition \ref{lengdist}). Again the path  $\gamma: [0,1]\ni  t\mapsto e^{tx}e^{y}$ is a geodesic for any unitarily invariant norm as long as $\|x\|_{\infty}\le \pi$ (now see \cite[Theorem 16]{alv14} for a proof).
\end{rem}

Then Theorem \ref{convdistancNUI} can be restated as follows:

\begin{thm}\label{convrecu} Let $\|e^{tx}e^{y}-1\|_{\infty}<\sqrt{2}$ in $[0,1]$, let $\|\cdot\|_{\phi}$ be any unitariliy invariant norm in $\u_n(\mathbb C)$, let $d_{\phi}$ be the induced  metric in $\U_n(\mathbb C)$. Then 
$$
t\mapsto d_{\phi}(1,e^{tx}e^{y})
$$
is convex in $[0,1]$. 
\end{thm}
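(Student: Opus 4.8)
The plan is to reduce the rectifiable-distance statement to the already-established convexity of $t\mapsto\|x(t)\|$ from Theorem~\ref{convdistancNUI}. The key observation is that under the hypothesis $\|e^{tx}e^y-1\|_\infty<\sqrt{2}$, we have $\|x(t)\|_\infty<\pi/2\le\pi$ for all $t\in[0,1]$, where $e^{ix(t)}=e^{itx}e^{iy}$ (here one uses the elementary fact that $\|u-1\|_\infty<\sqrt{2}$ is equivalent to the spectrum of $-i\log u$ lying in $(-\pi/2,\pi/2)$, as noted in the proof of Theorem~\ref{convsumadistint}). By the geodesic remark quoted just above the theorem (citing \cite[Theorem 16]{alv14}), the bound $\|x(t_0)\|_\infty\le\pi$ guarantees that for each fixed $t_0$ the path $s\mapsto e^{is x(t_0)}$, $s\in[0,1]$, is a \emph{minimizing} geodesic from $1$ to $e^{ix(t_0)}=e^{it_0 x}e^{iy}$; hence its length equals the rectifiable distance, i.e. $d_\phi(1,e^{it_0x}e^{iy})=\|x(t_0)\|$.

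With this identification in hand, the proof is immediate: first I would state the norm-to-path translation, writing $u(t)=e^{itx}e^{iy}=e^{ix(t)}$ with $x(t)$ the (smooth, skew-Hermitian) principal logarithm, which is well defined and smooth on $[0,1]$ precisely because $\|u(t)-1\|_\infty<\sqrt{2}<2$. Second, I would invoke the geodesic remark to get $d_\phi(1,u(t))=\|x(t)\|$ for every $t\in[0,1]$, using the spectral bound $\|x(t)\|_\infty<\pi/2\le\pi$. Third, I would apply Theorem~\ref{convdistancNUI} directly: its hypotheses are exactly that $u(t)=e^{itx}e^{iy}$ is a segment on $[t_1,t_2]=[0,1]$ and $\|x(t)\|_\infty<\pi/2$, and its conclusion is that $t\mapsto\|x(t)\|$ is convex. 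Composing the two gives convexity of $t\mapsto d_\phi(1,e^{itx}e^{iy})$, which is the claim.

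There is essentially no hard step here — the theorem is a restatement (the excerpt literally says ``Theorem~\ref{convdistancNUI} can be restated as follows''). The only point requiring a line of care is the equivalence between the operator-norm ball condition on $u(t)$ and the spectral condition $\|x(t)\|_\infty<\pi/2$: if $-i\log u(t)$ has eigenvalues $\theta_k(t)\in(-\pi,\pi)$, then $\|u(t)-1\|_\infty=\max_k|e^{i\theta_k(t)}-1|=\max_k 2|\sin(\theta_k(t)/2)|$, so $\|u(t)-1\|_\infty<\sqrt{2}$ iff $|\sin(\theta_k(t)/2)|<\tfrac{1}{\sqrt2}$ for all $k$ iff $|\theta_k(t)|<\pi/2$ for all $k$, i.e. $\|x(t)\|_\infty<\pi/2$. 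One should also note for honesty that the ``geodesic'' claim with a possibly only positively-homogeneous norm still yields $L(\gamma)=d_\phi$ with $\gamma(s)=e^{isx(t_0)}$ because that particular parametrization runs in the ``forward'' direction; but since here we have an honest unitarily invariant (fully homogeneous) norm on $\u_n(\mathbb{C})$, $d_\phi$ is a genuine metric and no orientation subtlety arises. I would therefore keep the proof to two or three sentences: translate to the logarithm, identify distance with norm via the geodesic property, and quote Theorem~\ref{convdistancNUI}.

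\begin{proof}
Write $u(t)=e^{itx}e^{iy}=e^{ix(t)}$ where $x(t)$ is the skew-Hermitian principal logarithm of $u(t)$; this is well defined and smooth on $[0,1]$ since $\|u(t)-1\|_\infty<\sqrt2<2$. As noted in the proof of Theorem~\ref{convsumadistint}, the condition $\|u(t)-1\|_\infty<\sqrt2$ is equivalent to all the eigenvalues $\theta_k(t)$ of $-ix(t)\cdot i=x(t)/i$ (that is, of $-i\log u(t)$) lying in $(-\pi/2,\pi/2)$, so $\|x(t)\|_\infty<\pi/2\le\pi$ for all $t\in[0,1]$. By the preceding remark (see \cite[Theorem 16]{alv14}), for each fixed $t\in[0,1]$ the path $s\mapsto e^{isx(t)}$, $s\in[0,1]$, is then a geodesic joining $1$ to $e^{ix(t)}=e^{itx}e^{iy}$, whence $d_\phi(1,e^{itx}e^{iy})=L(s\mapsto e^{isx(t)})=\|x(t)\|$. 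Finally, the hypotheses of Theorem~\ref{convdistancNUI} hold on $[t_1,t_2]=[0,1]$ (namely $\|x(t)\|_\infty<\pi/2$), so $t\mapsto\|x(t)\|$ is convex on $[0,1]$. Combining the last two facts, $t\mapsto d_\phi(1,e^{itx}e^{iy})$ is convex on $[0,1]$.
\end{proof}
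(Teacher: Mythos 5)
Your proposal is correct and follows exactly the route the paper intends: Theorem \ref{convrecu} is presented there as a restatement of Theorem \ref{convdistancNUI}, obtained by using the remark (via \cite[Theorem 16]{alv14}) that $s\mapsto e^{sx(t)}$ is a minimizing geodesic once $\|x(t)\|_\infty\le\pi$, so that $d_\phi(1,e^{tx}e^{y})=\|x(t)\|_\phi$, and then quoting the convexity of $t\mapsto\|x(t)\|_\phi$. You merely make explicit the spectral equivalence $\|u(t)-1\|_\infty<\sqrt{2}\iff\|x(t)\|_\infty<\pi/2$ and the distance--norm identification that the paper leaves implicit, which is fine.
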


By Remark \ref{opt} this convexity radius is also optimal.

\section*{Acknowledgements}
This research was partially supported by CONICET and Universidad de Buenos Aires. Research Grants: UBACyT 2023, nº 20020220400256BA, FCEyN-UBA and ANPCyT  PICT2019-2019-04060.





\noindent

\begin{thebibliography}{XXXXxx}

\bibitem[ACL10]{finitemeasure} E. Andruchow, E. Chiumiento, G. Larotonda, \textit{Homogeneous manifolds from noncommutative measure spaces.} J. Math. Anal. Appl. 365 (2010), no. 2, 541--558.

\bibitem[AL10]{fredholm} E. Andruchow, G. Larotonda, \textit{The rectifiable distance in the unitary Fredholm group.} Studia Math. 196 (2010), no. 2, 151--178.

\bibitem[ALR10]{pschatten} E. Andruchow, G. Larotonda, L. Recht, \textit{Finsler geometry and actions of the $p$-Schatten unitary groups.} Trans. Amer. Math. Soc. 362 (2010), no. 1,

\bibitem[ALV14]{alv14} J. Antezana, G. Larotonda,  A. Varela. \textit{Optimal paths for symmetric actions in the unitary group}. Comm. Math. Phys. 328 (2014), no. 2, 481--497.


\bibitem[Bel08]{belkale} P. Belkale, \textit{Quantum generalization of the Horn conjecture}. J. Amer. Math. Soc. 21 (2008), no. 2, 365--408.

\bibitem[BDMS12]{beltran} C. Beltr\'an, J.-P. Dedieu, G,  Malajovich, M. Shub, \textit{Convexity properties of the condition number II}. SIAM J. Matrix Anal. Appl. 33 (2012), no. 3, 905--939.

\bibitem[B97]{bhatia} R. Bhatia. \textit{Matrix analysis}. Graduate Texts in Mathematics, 169. Springer-Verlag, New York, 1997. 

\bibitem[B07]{bha} R. Bhatia. \textit{Perturbation bounds for matrix eigenvalues}. Reprint of the 1987 original. Classics in Applied Mathematics, 53. Society for Industrial and Applied Mathematics (SIAM), Philadelphia, PA, 2007.

\bibitem[HM90]{horn} R. A. Horn, R. Mathias, \textit{Cauchy-Schwarz inequalities associated with positive semidefinite matrices.} Linear Algebra Appl. 142 (1990), 63--82.

\bibitem[L07]{laxla} P. D. Lax, \textit{Linear algebra and its applications}. Enlarged second edition. Pure and Applied Mathematics (Hoboken). Wiley-Interscience [John Wiley \& Sons], Hoboken, NJ, 2007.


\bibitem[LM23]{larmi} G. Larotonda, M. Miglioli. \textit{Hofer's metric in compact Lie groups}. Groups Geom. Dyn. 17 (2023), no. 3, 839--898.

\bibitem[LR23]{larey} G. Larotonda, I. Rey. \textit{Weakly invariant norms: geometry of spheres in the space of skew-Hermitian matrices}. Linear Algebra Appl. 678 (2023), 136--168.


\bibitem[M23]{miglioli} M. Miglioli, \textit{Geometry of infinite dimensional unitary groups: convexity and fixed points.} J. Math. Anal. Appl. 527 (2023), no. 1, part 1, Paper No. 127436, 27 pp.

\bibitem[M24]{miglioli24} M. Miglioli, \textit{Geometry of spectral bounds of curves of unitary operators}. Proc. Amer. Math. Soc. 152 (2024), no. 8, 3475--3484.

\bibitem[MOZZ03]{mozz} N. McCarthy, D. Ogilvie, N. Zobin, V. Zobin, \textit{Convex geometry of Coxeter-invariant polyhedra}, Trends in Banach spaces and operator theory (Memphis, TN, 2001), 153--179, Contemp. Math., 321, Amer. Math. Soc., Providence, RI, 2003.

\end{thebibliography}
\end{document}